\tikzset{
    vertex/.style = {
        circle,
        draw,
        outer sep = 3pt,
        inner sep = 3pt,
    },edge/.style = {->,> = latex'}
}
\def\diag{\mathop{\rm diag}}
\def\adj{\mathop{\rm adj}}
\def\rank{\mathop{\rm rank}}
\newcommand{\rr}{\mathbb{R}}
\newcommand{\1}{\mathbf{1}}
\newcommand{\n}{\frac{n}{2}}
\def\det{{\rm det}}
\def\cir{{\rm Circ}}
\def\M{\widetilde{M}}
\def\L{\mathcal{L}}
\def\D{\widetilde{D}}
\newtheorem{theorem}{Theorem}
\newtheorem{lemma}{Lemma}
\begin{document}
\begin{center}
\begin{large}
On distance matrices of helm graphs obtained from wheel graphs with an even number of vertices
\end{large}
\end{center}
\begin{center}
Shivani Goel \\
\today
\end{center}

\begin{abstract}
Let $n \geq 4$. The helm graph $H_n$ on $2n-1$ vertices is obtained from the wheel graph $W_n$ by adjoining a pendant edge to each vertex of the outer cycle of $W_n$. Suppose $n$ is even. Let $D := [d_{ij}]$ be the distance matrix of $H_n$. In this paper, we first show that $\det(D) = 3(n-1)2^{n-1}.$ Next, we find a matrix $\L$ and a vector $u$ such that 
\[D^{-1} = -\frac{1}{2}\L+\frac{4}{3(n-1)}uu'.\]
We also prove an interlacing property between the eigenvalues of $\L$ and $D$. 
\end{abstract}

{\bf Keywords.} Helm graphs, Laplacian matrices, Distance matrices, Circulant matrices \\
{\bf AMS CLASSIFICATION.} 05C50

\section{Introduction} \label{intro}
Let $G := (V,E)$ be a simple connected graph with vertex set $V:=\{1,2,\dotsc,n\}$ and edge set $E$. We denote a pair of adjacent vertices $i$ and $j$ in $G$ by $(i,j)$. There are three common matrices associated with $G$. The first one is the adjacency matrix $A$. The second matrix is the Laplacian matrix $L:=\diag(\delta_1,\dotsc,,\delta_n)- A$, where $\delta_i$ is the degree of vertex $i$. The third one is the distance matrix which is defined as follows. For $i \neq j$, let $d_{ij}$ denote the length of a shortest path connecting $i$ and $j$. For each $i \in V$, let $d_{ii}=0$. The matrix $D := [d_{ij}]$ is called the distance matrix of $G$. In this paper, we consider distance matrices. These matrices have wide literature and many applications. For a comprehensive introduction, see \cite{must}. 

Let $T$ be a tree with vertex set $V = \{1,2,\dotsc,n\}$. Suppose $D(T)$ and $L(T)$ denote the distance and Laplacian matrices of $T$, respectively. In Graham and Pollack \cite{GP}, the following elegant formula is obtained to compute the determinant of $D(T)$:
\begin{equation*}
    \det(D(T)) = (-1)^{n-1}(n-1) 2^{n-2}. 
\end{equation*}
In a subsequent paper by Graham and Lov\'asz \cite{Graham}, a remarkable formula is obtained to compute the inverse of $D(T)$, which says the following.
\begin{equation*}\label{tree}
    D(T)^{-1}=-\frac{1}{2}L(T) + \frac{1}{2(n-1)}\tau \tau',
\end{equation*} 
where $\tau = (2-\delta_1,\dotsc,2-\delta_n)'$. These results motivate to compute the determinant and the inverse of distance matrices of connected graphs other than trees. This problem is addressed for wheel graphs, complete graphs, complete bipartite graphs etc. in \cite{BALAJI2021274}, \cite{balaji2020distance},
\cite{sivasu} and \cite{hou}. Suppose $D$ is the distance matrix of helm graph obtained from a wheel graph with even number of vertices. 
In this paper, we show that $\det(D) = 3(n-1)2^{n-1}$
and \[D^{-1} = -\frac{1}{2}\L+\frac{4}{3(n-1)}uu'.\]

\subsection{Helm graphs}
Let $n \geq 4$. The helm graph $H_n$ on $2n-1$ vertices is obtained from the wheel graph $W_n$ by adjoining a pendant edge to each vertex of the outer cycle of $W_n$. An example of a helm graph obtained from the wheel graph on $6$ vertices is given in Figure \ref{fig_H6}.
\begin{figure}[!h]
\centering
\begin{tikzpicture}[shorten >=1pt, auto, node distance=3cm, ultra thick,
   node_style/.style={circle,draw=black,fill=white !20!,font=\sffamily\Large\bfseries},
   edge_style/.style={draw=black, ultra thick}]
\node[vertex] (1) at  (0,0) {$1$};
\node[vertex] (2) at  (2,-2) {$2$};
\node[vertex] (3) at  (2,0) {$3$}; 
\node[vertex] (4) at  (0,2) {$4$};  
\node[vertex] (5) at  (-2,0) {$5$};  
\node[vertex] (6) at  (-2,-2) {$6$};
\node[vertex] (7) at  (3.5,-3.5) {$7$};
\node[vertex] (8) at  (4,0) {$8$}; 
\node[vertex] (9) at  (0,4) {$9$};  
\node[vertex] (10) at  (-4,0) {$10$};  
\node[vertex] (11) at  (-3.5,-3.5) {$11$};
\draw  (1) to (2);
\draw  (1) to (3);
\draw  (1) to (4);
\draw  (1) to (5);
\draw  (1) to (6);
\draw  (2) to (3);
\draw  (3) to (4);
\draw  (4) to (5);
\draw  (5) to (6);
\draw  (6) to (2);
\draw  (2) to (7);
\draw  (3) to (8);
\draw  (4) to (9);
\draw  (5) to (10);
\draw  (6) to (11);
\end{tikzpicture}
\caption{Helm graph $H_6$} \label{fig_H6}
\end{figure}
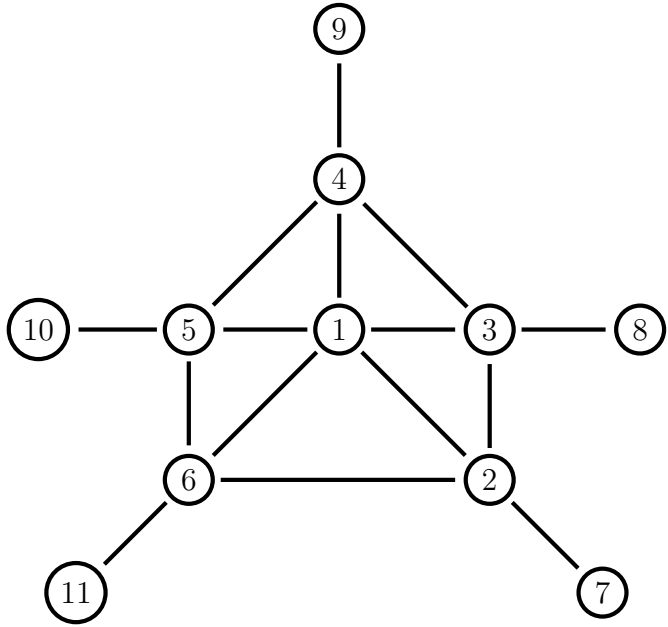
In \cite{Jakli}, it is shown that distance matrices of helm graphs are Euclidean distance matrices. Let $D$ be the distance matrix of $H_n$. From numerical computations, we observe that whenever $n$ is even, $D$ is an invertible matrix. In this paper, we first show that
$$\det(D) =  3(n-1)2^{n-1}.$$
Then we deduce an inverse formula for $D$. This formula says that 
\[D^{-1} = -\frac{1}{2}\L+\frac{4}{3(n-1)}uu',\]
where $u$ is non-zero and $\L$ is positive semidefinite, $\rank(\L) = n-1$ and row sums of $\L$ equal to zero.
We conclude the paper by proving an interlacing property between the eigenvalues of $\L$ and $D$. 

\section{Preliminaries}
In this section, we list a few notation, definitions, and preliminary results. 
\renewcommand{\labelenumi}{(P\arabic{enumi})}
\begin{enumerate}
    \item \label{(P1)} All vectors are considered as column vectors unless stated otherwise. 
    
    \item The transpose of a matrix $A$ is denoted by $A'$. We use $\adj(A)$ and $\det(A)$ to represent the adjoint and determinant of $A$, respectively.
    
    \item  The notation $n$ will stand for an even integer which is atleast four. We reserve the notation $m$ to denote the integer $2n-1$.
    
    \item \label{(P2)} The wheel graph on $n$ vertices is denoted by $W_n$. Let $V := \{1,\dotsc,n\}$ be the vertex set of $W_n$ and $E$ be the edge set of $W_n$. We label $W_n$ as follows: The center of $W_n$ is labeled $1$ and the $n-1$ vertices in the outer cycle of $W_n$ are labeled $2,3,\dotsc,n$, anticlockwise. Let $\widetilde{V} := V \cup \{n+1,n+2,\dotsc,m\}$ and $\widetilde{E} := E \cup \{(j,n+j-1):2\leq j \leq n\} $. Now the helm graph $H_n$ is the graph $(\widetilde{V}, \widetilde{E})$. It is clear that $H_n$ has $m$ vertices and $3(n-1)$ edges. For an example, see Figure \ref{fig_H6}.
    
    \item We use $\1$ to denote the column vector of all ones in $\rr^{n-1}$. The $({n-1}) \times ({n-1})$ matrix of all ones is denoted by $J$. The identity matrix of order ${n-1}$ is denoted by $I$. If $\mu \neq {n-1}$, then $\1_\mu$ denote the column vector of all ones in $\rr^\mu$ and $I_\mu$, $J_\mu$ will denote the $\mu \times \mu$ identity and all ones matrix, respectively.

    \item \label{(P3)} Let $s = (s_1,s_2,\dotsc,s_\mu)' \in \rr^\mu$. We use the notation $\cir(s')$ to denote the circulant matrix 
    \[ \left[\begin{array}{ccccccc}
        s_1 & s_2 & s_3 & \ldots & s_\mu \\
s_\mu & s_1 & s_2 & \ldots & s_{\mu-1} \\
s_{\mu-1} & s_\mu & s_1 &  \ldots &s_{\mu-2} \\
\vdots & \vdots & \vdots &  \ldots &\vdots \\
s_2 & s_3 & s_4 &  \ldots &s_1
    \end{array}\right].\]
    It is easy to note that if $C$ is a circulant matrix, then $\cir(s')C = \cir(s'C)$. 
    
\item \label{(P4)} Let $\D = \cir(v')$, where $v = (0,1,2,\dotsc,2,1)' \in \rr^{n-1}$. The distance matrix $D$ of the helm graph $H_n$ is now
\begin{equation}\label{D,D}
    D = \left[\begin{array}{ccc}
         0& \1'  & 2\1'\\
         \1& \D & \D+J\\
         2 \1& \D+J & \D+2(J-I)
    \end{array}\right].
\end{equation}
We note that $\D\1 = 2(n-3)\1$.
\item \label{(P5)} The following identity will be useful in the paper. If $n$ is even, then
$$\sum_{k=1}^{\frac{n}{2}-1} (-1)^k {(n-1-2k)}=\frac{2-n}{2}.$$

\item \label{(P6)} Let $A$ be a $\mu \times \mu$ matrix partitioned  
\[A = \left[\begin{array}{cc}
     A_{11}& A_{12} \\
     A_{21}&A_{22} 
\end{array}\right],\]
where $A_{11}$ and $A_{22}$ are square matrices. If $A_{11}$ is nonsingular then the Schur complement of $A_{11}$ in $A$ is  $A_{22}-A_{21}A_{11}^{-1}A_{12}$ and
\[\det(A) = \det(A_{11})\det(A_{22}-A_{21}A_{11}^{-1}A_{12}).\]
(See section 6.3 in \cite{zhang2011matrix}).
\item \label{(P7)} Let $A$ be an $\mu \times \mu$ matrix. If $u$ and $v$ belong to $\rr^\mu$, then

\[\det(A+uv') = \begin{cases}
    (1+v'A^{-1}u)\det(A) &\mbox{if}~A~\mbox{ is invertible} \\
    \det(A)+v'\adj(A)u &\mbox{if}~A~\mbox{is not invertible}.
    \end{cases}\]
 This result is well known as matrix determinant lemma.

\item \label{(P8)} An $n \times n$ non-negative symmetric matrix $A$ is called a Euclidean distance matrix, if all the diagonal entries are equal to zero and $x'Ax \leq 0$ for all $x \in \{\1\}^{\perp}.$ A Euclidean distance matrix has exactly one positive eigenvalue. For details, see chapter 3 in \cite{alfakih}.
\end{enumerate}

\subsection{Special Laplacian matrix for $H_n$}\label{laplacian}
Let $n$ be even. For each $k \in \{1,2,\dotsc,\frac{n}{2}-1\}$, define a vector ${c^k} := (c_{1}^{k},\dotsc,c_{n-1}^{k}) '$ in $\rr^{n-1}$ by
\begin{equation*}
    c^k_j := \begin{cases}
    1 & j=k+1~\mbox{or}~j=n-k \\ 
    0 &\mbox{otherwise}.
    \end{cases}
\end{equation*}
We now define the special Laplacian matrix $\L$ for $H_n$ as follows:
\begin{equation}\label{D,Lap}
    \L := \frac{1}{2}\left[\begin{array}{ccc}
        {n-1} & -\1' & ~~0\\
         -\1& (n+1) I & -2I\\
         0 & -2I & ~~{2}I
    \end{array}\right]+\sum_{k=1}^{\n-1}(-1)^{k}\frac{(n-1)-2k}{2}\left[\begin{array}{ccc}
        0 & 0 & 0\\
         0& C_k & 0\\
         0 & 0 & 0
    \end{array}\right],
\end{equation}
where $C_k := \cir({c^k}^{'})$. 

\section{Results}
We now prove our main results. 

\subsection{Determinant}
For distance matrices of $H_4$ and $H_6$, we now compute the determinant directly and give the formulas. 

\begin{enumerate}
    \item[$\bullet$] Consider $H_4$.  
The distance matrix $D$ of $H_4$ is given by
\begin{equation*} 
      D=\left[
{\begin{array}{rrrrrrr}
0 & 1 & 1 & 1 & 2 & 2 & 2 \\
1 & 0 & 1 & 1 & 1 & 2 & 2 \\
1 & 1 & 0 & 1 & 2 & 1 & 2 \\
1 & 1 & 1 & 0 & 2 & 2 & 1 \\
2 & 1 & 2 & 2 & 0 & 3 & 3 \\
2 & 2 & 1 & 2 & 3 & 0 & 3 \\
2 & 2 & 2 & 1 & 3 & 3 & 0\end{array}}
\right].
\end{equation*}
Let $n=4$. By direct computation, we have
\[\det(D) = 72 = 3(n-1)2^{n-1}.\]
\item[$\bullet$] 
Consider the graph $H_6$ given in Figure \ref{fig_H6}. 
The distance matrix $D$ of $H_6$ is given by
\begin{equation*} 
      D=\left[
{\begin{array}{rrrrrrrrrrrrrrrrrr}
0 & 1 & 1 & 1 & 1 & 1 & 2 & 2 & 2 & 2 & 2 \\
1 & 0 & 1 & 2 & 2 & 1 & 1 & 2 & 3 & 3 & 2 \\
1 & 1 & 0 & 1 & 2 & 2 & 2 & 1 & 2 & 3 & 3 \\
1 & 2 & 1 & 0 & 1 & 2 & 3 & 2 & 1 & 2 & 3 \\
1 & 2 & 2 & 1 & 0 & 1 & 3 & 3 & 2 & 1 & 2 \\
1 & 1 & 2 & 2 & 1 & 0 & 2 & 3 & 3 & 2 & 1 \\
2 & 1 & 2 & 3 & 3 & 2 & 0 & 3 & 4 & 4 & 3 \\
2 & 2 & 1 & 2 & 3 & 3 & 3 & 0 & 3 & 4 & 4 \\
2 & 3 & 2 & 1 & 2 & 3 & 4 & 3 & 0 & 3 & 4 \\
2 & 3 & 3 & 2 & 1 & 2 & 4 & 4 & 3 & 0 & 3 \\
2 & 2 & 3 & 3 & 2 & 1 & 3 & 4 & 4 & 3 & 0\end{array}}
\right].
\end{equation*}
Let $n=6$. We compute
\[\det(D) = 480 = 3(n-1)2^{n-1}.\]
\end{enumerate}
We assume $n \geq 8$ is even. 
For $1\leq k \leq \n-1$, define $q^k:={c^k}'\D$. We shall write $q^k=(q_{1}^k,\dotsc,q_{n-1}^k)$ and define $f:=(f_{1},\dotsc,f_{n-1})$. Now $f$ is the row vector
\[f=\sum_{k=1}^{\n-1} (-1)^{k} \frac{(n-1)-2k}{2}(q_{1}^k,\dotsc,q_{n-1}^{k}).\]
The next lemma gives a precise expression for $f$. 

\begin{lemma} \label{f-vector}
$f=(-1,\frac{3-n}{2},2-n,\dotsc,2-n,\frac{3-n}{2})$.
\end{lemma}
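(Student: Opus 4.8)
The plan is to compute $f$ directly from its definition by first understanding the vectors $q^k = {c^k}'\widetilde{D}$, then summing against the coefficients $(-1)^k\frac{(n-1)-2k}{2}$ and invoking identity (P5). Since $\widetilde{D} = \cir(v')$ with $v = (0,1,2,\dotsc,2,1)'$ and ${c^k}$ has ones exactly in positions $k+1$ and $n-k$, each $q^k$ is a sum of two cyclic shifts of the row $v'$. Concretely, ${c^k}'\cir(v') = \cir({c^k}'v')$ wait — more directly, ${c^k}'\widetilde{D}$ is the sum of rows $k+1$ and $n-k$ of $\widetilde{D}$, i.e. $q^k_j = v_{j-k} + v_{j-(n-1-k)}$ with indices mod $n-1$. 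So the first step is to write down $q^k_j$ explicitly as a function of $j$ and $k$; because $v$ is the "distance along the cycle" vector, $q^k_j$ is a small piecewise-linear function, and crucially $q^k_j$ for $j \notin \{1, 2, n-1\}$ should turn out to be the constant $2\cdot(\text{something})$ that, after weighting, telescopes.

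Next I would split the computation of $f_j = \sum_{k=1}^{n/2-1}(-1)^k\frac{(n-1)-2k}{2}\,q^k_j$ into the boundary coordinates $j \in \{1,2,n-1\}$ and the "interior" coordinates $3 \le j \le n-2$. For the interior coordinates I expect $q^k_j$ to be independent of $k$ (equal to $4$, the sum of two generic cycle-distances both equal to $2$) for all but $O(1)$ values of $k$, so that $f_j = 4\sum_k(-1)^k\frac{(n-1)-2k}{2} + (\text{finite correction})$; the main sum is $2\sum_k(-1)^k((n-1)-2k) = 2\cdot\frac{2-n}{2} = 2-n$ by (P5), and one checks the correction terms vanish, giving $f_j = 2-n$. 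The symmetry $c^k_j = c^k_{n+1-j}$ makes $\widetilde{D}$ and hence $f$ symmetric under $j \mapsto n+1-j$ (with coordinate $1$ fixed), so it suffices to handle $j=1$ and $j=2$, where a few shifts of $v$ have their nonconstant entries ($0$'s and $1$'s) landing in the relevant slot; summing these against the weights and again applying (P5) should produce $f_1 = -1$ and $f_2 = \frac{3-n}{2}$.

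The main obstacle is bookkeeping the index arithmetic modulo $n-1$: the two ones of $c^k$ sit at positions $k+1$ and $n-k$, which are reflections of each other, and as $k$ ranges over $1,\dotsc,n/2-1$ these sweep out all non-central positions, so one must carefully track for which $(j,k)$ the shifted copies of $v$ contribute a $0$ or a $1$ rather than a $2$. I would organize this by writing $q^k_j = v_{((j-k-1)\bmod(n-1))+1} + v_{((j-n+k-1)\bmod(n-1))+1}$ and tabulating the at most four $k$-values (for each fixed $j$) where an argument of $v$ is $0$, $1$, or $n-2$ (the positions where $v \ne 2$), handling the residual finite sums by hand. A useful sanity check along the way: the row sum of $f$ must equal $\sum_k(-1)^k\frac{(n-1)-2k}{2}\,{c^k}'\widetilde{D}\1 = 2(n-3)\sum_k(-1)^k\frac{(n-1)-2k}{2} = 2(n-3)\cdot\frac{2-n}{2} = -(n-3)(n-2)$, using $\widetilde{D}\1 = 2(n-3)\1$ from (P4); and indeed $-1 + 2\cdot\frac{3-n}{2} + (n-4)(2-n) = -1 + (3-n) + (n-4)(2-n) = -(n-3)(n-2)$, which confirms the claimed form of $f$.
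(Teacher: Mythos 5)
Your plan is sound and would yield a complete proof, but it is worth knowing that the paper itself does not prove this lemma at all: its entire ``proof'' is a citation to Lemma~9 of \cite{BALAJI2021274}, where exactly the direct computation you outline is carried out. So your proposal is not so much a different route as a self-contained reconstruction of the outsourced argument: identify $q^k$ as the sum of rows $k+1$ and $n-k$ of $\widetilde D=\cir(v')$, observe that $q^k_j=d(j-1-k)+d(j-1+k)$ where $d$ is the capped cycle-distance encoded by $v$ (so the generic value is $4$), split into interior and boundary coordinates, and finish with (P\ref{(P5)}). I checked the boundary cases and they come out right, e.g.\ $f_1=-(n-3)\cdot 2/2\cdot(-1)^1\cdot 2+\dots=-(n-3)+(n-4)=-1$ and $f_2=\frac{3-n}{2}$, using that $q^1_1=2$, $q^k_1=4$ for $k\ge 2$, and $q^1_2=2$, $q^2_2=3$, $q^k_2=4$ for $k\ge 3$ (this needs $n\ge 8$, which is the standing assumption where the lemma is invoked). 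Two small slips to repair in the write-up: in your explicit index formula the second summand should be $v_{((j-n+k)\bmod(n-1))+1}$, not $v_{((j-n+k-1)\bmod(n-1))+1}$ (equivalently $v_{((j+k-1)\bmod(n-1))+1}$); and in the row-sum sanity check you dropped the factor ${c^k}'\1=2$, writing $2(n-3)$ instead of $4(n-3)$, which happens to be compensated by also evaluating $\sum_k(-1)^k\frac{(n-1)-2k}{2}$ as $\frac{2-n}{2}$ instead of $\frac{2-n}{4}$ --- the final value $-(n-2)(n-3)$ is correct, but the intermediate constants are not.
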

\begin{proof}
See Lemma 9 in \cite{BALAJI2021274}. 
\end{proof}

\begin{lemma}\label{L,matrixB}
Define $$B := \cir\big(\frac{n-1}{2}v'-\frac{3}{2}\1'+\sum_{k=1}^{\n-1}(-1)^{k}\frac{(n-1)-2k}{2}c_k'\D\big),$$ where $v$ is defined in (P\ref{(P4)}).  Then $B=-2I- \frac{1}{2} J.$
\end{lemma}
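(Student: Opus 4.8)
The plan is to recognise the cumbersome sum inside the circulant as the vector $f$ already analysed in Lemma~\ref{f-vector}, and then to collapse the remaining expression by a short entrywise computation, using only the linearity of $\cir(\,\cdot\,')$.

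First I would observe that, since $c_k=c^k$ and $q^k={c^k}'\D$, the sum $\sum_{k=1}^{\n-1}(-1)^k\frac{(n-1)-2k}{2}c_k'\D$ is, by definition, precisely the row vector $f=(f_1,\dots,f_{n-1})$ introduced just before Lemma~\ref{f-vector}. Hence the argument of the circulant defining $B$ is $w':=\frac{n-1}{2}v'-\frac{3}{2}\1'+f$. Because $\cir(\,\cdot\,')$ is linear, $\cir(\1')=J$, and $\cir(e_1')=I$ where $e_1$ is the first standard basis vector of $\rr^{n-1}$, it suffices to prove that $w=-\frac12\1-2e_1$; then $B=\cir(w')=-\frac12 J-2I=-2I-\frac12 J$, as claimed.

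Next I would substitute the explicit value $f=(-1,\frac{3-n}{2},2-n,\dots,2-n,\frac{3-n}{2})$ from Lemma~\ref{f-vector} together with $v=(0,1,2,\dots,2,1)'$ and compute $w$ coordinate by coordinate, splitting into the three blocks forced by the shapes of $v$ and $f$: the first coordinate gives $0-\frac32+(-1)=-\frac52$; the second and the last coordinates both give $\frac{n-1}{2}-\frac32+\frac{3-n}{2}=1-\frac32=-\frac12$; and each middle coordinate $3,\dots,n-2$ gives $(n-1)-\frac32+(2-n)=1-\frac32=-\frac12$. Therefore $w=(-\frac52,-\frac12,\dots,-\frac12)'=-\frac12\1-2e_1$, and applying $\cir(\,\cdot\,')$ completes the proof.

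I do not expect a genuine obstacle: the only points requiring care are (i) correctly identifying the sum with $f$ so that Lemma~\ref{f-vector} applies verbatim, and (ii) handling the boundary coordinates of $v$ and $f$ (positions $1$, $2$ and $n-1$) separately from the generic middle coordinates, since neither $v$ nor $f$ is constant there. The rest is linearity of the circulant map and elementary arithmetic.
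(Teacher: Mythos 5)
Your proposal is correct and follows essentially the same route as the paper: identify the alternating sum with the vector $f$ from Lemma~\ref{f-vector}, compute $\frac{n-1}{2}v'-\frac{3}{2}\1'+f=(-\frac{5}{2},-\frac{1}{2},\dotsc,-\frac{1}{2})$ entrywise, and read off $B=-2I-\frac{1}{2}J$. Your explicit treatment of the boundary coordinates and the linearity of $\cir(\cdot)$ just spells out what the paper calls ``an easy manipulation.''
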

\begin{proof} We note that
 \[B=\cir\big(-\frac{3}{2}\1'+\frac{(n-1)}{2}v'+f\big).\]
 Since \(v=(0,1,2,\dotsc,2,1)'\) and 
 \(f=(-1,\frac{3-n}{2},2-n,\dotsc,2-n,\frac{3-n}{2}), \)
\begin{equation*}
    \begin{aligned}
        \frac{n-1}{2}v'-\frac{3}{2} \1'+f
        &=(-\frac{5}{2},-\frac{1}{2},\dotsc,-\frac{1}{2}).
    \end{aligned}
\end{equation*} 
By an easy manipulation we have,
\begin{equation*}
    B=\cir(-\frac{5}{2},-\frac{1}{2},\dotsc,-\frac{1}{2})=-2 I- \frac{1}{2} J.
\end{equation*}
The proof is complete.
\end{proof}

Let $W_n$ be a wheel graph on $n$ vertices. Suppose
\[M := \left[\begin{array}{ccc}
         0& \1' \\
         \1& \D 
    \end{array}\right],\]
    where $\widetilde{D}$ is defined in (P\ref{(P4)}). Then $M$ is the distance matrix of $W_n$. Also, by (\ref{D,D}), $M$ is a principal $n \times n$ submatrix of $D$.
We need the following result of \cite{BALAJI2021274}. 
\begin{theorem} \label{inversewheel}
Let $n \geq 4$ be an even integer and $M$ denote the distance matrix of the wheel graph $W_n$. Define $w \in \rr^n$ by
$w:=\frac{1}{4}(5-n,1,\dotsc,1)'$. Then,
\[M^{-1} = -\frac{1}{2} \widetilde{L}+\dfrac{4}{n-1}ww',\] where $\widetilde{L}$ is given by
\begin{equation}\label{laplacianwheel}
    \widetilde{L} := \dfrac{(n-1)}{2}I_n - \dfrac{1}{2} \left[{\begin{array}{cc}
        0 & \1'  \\ 
        \1 & 0
    \end{array}}\right] + \sum_{k=1}^{\frac{n}{2}-1} (-1)^k \dfrac{(n-1)-2k}{2} \left[{\begin{array}{cc}
        0 & 0  \\
        0 & C_k
    \end{array}}\right],
\end{equation}
and the matrices $C_k$ are defined in section \ref{laplacian}.
\end{theorem}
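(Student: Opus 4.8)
The plan is to verify directly that the claimed right-hand side
\[N := -\frac12\widetilde{L} + \frac{4}{n-1}ww'\]
satisfies $MN = I_n$. Since $M$, $\widetilde{L}$ and $ww'$ are all symmetric, $N$ is symmetric, so $MN = I_n$ automatically yields $NM = (MN)' = I_n$; in particular $M$ is invertible and $M^{-1} = N$, so there is nothing to prove about invertibility separately. Thus only the single identity $MN = I_n$ has to be checked, and I would do this blockwise using $M = \left[\begin{smallmatrix} 0 & \1'\\ \1 & \D\end{smallmatrix}\right]$ and the identity $\D\1 = 2(n-3)\1$ recorded in (P\ref{(P4)}).

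I would first dispose of the rank-one term. Writing the first coordinate vector of $\rr^n$ as $e_1$ and $w = \frac14\big((4-n)e_1 + \1_n\big)$, a block multiplication gives $Me_1 = (0,\1')'$ and $M\1_n = (n-1,\,2n-5,\dots,2n-5)'$ (the latter from $\D\1 = 2(n-3)\1$), and these combine to the clean identity $Mw = \frac{n-1}{4}\1_n$. Hence $\frac{4}{n-1}Mww' = \1_n w'$, so $MN = -\frac12 M\widetilde{L} + \1_n w'$ and it only remains to compute $M\widetilde{L}$.

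For $M\widetilde{L}$ I would use the block form $\widetilde{L} = \left[\begin{smallmatrix}\frac{n-1}{2} & -\frac12\1'\\ -\frac12\1 & \widetilde{L}_{22}\end{smallmatrix}\right]$ with $\widetilde{L}_{22} := \frac{n-1}{2}I + \sum_{k=1}^{\n-1}(-1)^k\frac{(n-1)-2k}{2}C_k$. Since each $C_k$ is a circulant with row and column sums equal to $2$, the identity (P\ref{(P5)}) gives $\1'\widetilde{L}_{22} = \frac12\1'$; combined with $\D\1 = 2(n-3)\1$ this produces the $(1,1),(1,2),(2,1)$ blocks of $M\widetilde{L}$ as $\frac{1-n}{2}$, $\frac12\1'$, and $\frac{5-n}{2}\1$. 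The $(2,2)$ block is $-\frac12 J + \D\widetilde{L}_{22}$; here I would use that products of circulants are circulant together with the rule $\cir(s')C = \cir(s'C)$ from (P\ref{(P3)}) to write $\D C_k = \cir({c^k}'\D) = \cir(q^k)$, whence $\D\widetilde{L}_{22} = \cir\big(\frac{n-1}{2}v' + f\big)$ with $f$ the vector introduced just before Lemma \ref{f-vector}. So the $(2,2)$ block equals $\cir\big(-\frac12\1' + \frac{n-1}{2}v' + f\big) = \cir\big(\frac{n-1}{2}v' - \frac32\1' + f\big) + J$, which by Lemma \ref{L,matrixB} is $\big(-2I - \frac12 J\big) + J = -2I + \frac12 J$. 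Altogether
\[M\widetilde{L} = \begin{bmatrix} \dfrac{1-n}{2} & \dfrac12\1'\\[6pt] \dfrac{5-n}{2}\1 & -2I + \dfrac12 J\end{bmatrix}.\]

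Finally I would assemble $MN = -\frac12 M\widetilde{L} + \1_n w'$ and compare blocks: the $(1,1)$ entry is $\frac{n-1}{4} + \frac{5-n}{4} = 1$, the two off-diagonal blocks cancel, and the $(2,2)$ block is $I - \frac14 J + \frac14 J = I$, so $MN = I_n$. The one genuinely non-routine step is the $(2,2)$-block computation: collapsing $\D\widetilde{L}_{22}$ to a single circulant and then recognizing it requires the explicit form of $f$ from Lemma \ref{f-vector} and the exact cancellation encoded in Lemma \ref{L,matrixB}; everything else is bookkeeping with $\1$, $J$ and the alternating-sum identity (P\ref{(P5)}).
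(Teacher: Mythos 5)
Your verification is correct, but it is worth noting that the paper does not prove Theorem \ref{inversewheel} at all: it imports the statement wholesale from \cite{BALAJI2021274}. So your argument is necessarily a different route, namely a self-contained check that $N:=-\frac{1}{2}\widetilde{L}+\frac{4}{n-1}ww'$ satisfies $MN=I_n$. I checked the computation: $Mw=\frac{n-1}{4}\1_n$ is right, the block form of $M\widetilde{L}$ is right (in particular $\1'\widetilde{L}_{22}=\frac{1}{2}\1'$ via (P\ref{(P5)}), and the $(2,2)$ block collapses to $-2I+\frac{1}{2}J$ via Lemma \ref{L,matrixB}), and the final assembly gives $I_n$. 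What your approach buys is a proof of the wheel formula by exactly the same mechanism the paper later uses for the helm graph in Theorem \ref{inverse} (compute $\L D$, show $Du$ is a multiple of $\1_m$, assemble), and it reuses Lemma \ref{L,matrixB} for a second purpose beyond its role in Lemma \ref{Minverse}(ii); this makes the paper's dependence on the external reference essentially removable.

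One small gap to flag: the lemmas you lean on are only available in the paper for $n\geq 8$. The definitions of $q^k$ and $f$, Lemma \ref{f-vector}, and hence Lemma \ref{L,matrixB} are all stated after the sentence ``We assume $n\geq 8$ is even,'' and the closed form $f=(-1,\frac{3-n}{2},2-n,\dotsc,2-n,\frac{3-n}{2})$ degenerates for $n=4$ and $n=6$ (few or no interior $2-n$ entries). Your theorem claims all even $n\geq 4$, so you should either check directly that the formula for $f$ and the conclusion $B=-2I-\frac{1}{2}J$ still hold in those two cases, or dispose of $n=4,6$ by explicit matrix computation as the paper does for the helm analogues. This is a bookkeeping matter, not a flaw in the method.
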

The following lemmas are needed in the sequel.
\begin{lemma}\label{Minverse}
Define $w \in \rr^n$ by
$w:=\frac{1}{4}(5-n,1,\dotsc,1)'$ and let $\widetilde{L}$ be given by (\ref{laplacianwheel}). Suppose $Z := \left[\begin{array}{cc}
     2 \1' \\ \D+J
\end{array}\right]$. Then the following are true.
\begin{enumerate}
\item[{\rm (i)}] $w'Z = \frac{n+3}{4}
\1'   .$
    \item[{\rm (ii)}] $\widetilde{L}Z= \left[\begin{array}{cc}
     \frac{5-n}{2}\1'  \\ 
     -2 I+ \frac{1}{2}J
\end{array}\right]. $
\item[{\rm (iii)}] $        M^{-1}Z =  \left[\begin{array}{cc}
     \frac{n-5}{4}\1'  \\ 
     I- \frac{1}{4}J
\end{array}\right] +\frac{n+3}{n-1}w
\1'
.$
\item[{\rm(iv)}] $        Z'M^{-1}Z = \D+\frac{2(n+1)}{n-1}J.$
\end{enumerate}
\end{lemma}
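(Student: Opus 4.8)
The plan is to prove the four identities in the stated order, since (iii) is assembled from (i) and (ii) via Theorem~\ref{inversewheel}, and (iv) is obtained by left-multiplying the formula in (iii) by $Z'$. Before starting, I would record the elementary facts that drive every computation: $\D$ and $J$ are symmetric; $\1'\D = 2(n-3)\1'$ and $\D J = 2(n-3)J$ (both from $\D\1 = 2(n-3)\1$ in (P\ref{(P4)})); $\1'J = (n-1)\1'$ and $J^{2} = (n-1)J$; each $c^{k}$ has exactly two nonzero coordinates, so $C_{k}\1 = 2\1$ and hence $C_{k}J = 2J$; the circulant product rule $\cir(s')C = \cir(s'C)$ from (P\ref{(P3)}); Lemma~\ref{f-vector}; and the alternating-sum identity (P\ref{(P5)}). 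With these in hand, part (i) is immediate: writing $w' = \tfrac14(5-n,1,\dots,1)$ and using the block form of $Z$, one gets $w'Z = \tfrac14\big(2(5-n)\1' + \1'(\D+J)\big)$, and $\1'(\D+J) = (2n-6)\1' + (n-1)\1' = (3n-7)\1'$ collapses this to $\tfrac{n+3}{4}\1'$.

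Part (ii) is the step with the most bookkeeping, and the one where I expect any sign slip to hide. I would write $\widetilde{L}$ from (\ref{laplacianwheel}) as a $2\times2$ block matrix with $(1,1)$ entry $\tfrac{n-1}{2}$, off-diagonal blocks $-\tfrac12\1'$ and $-\tfrac12\1$, and $(2,2)$ block $\tfrac{n-1}{2}I + \sum_{k=1}^{\n-1}(-1)^{k}\tfrac{(n-1)-2k}{2}C_{k}$, and then multiply by $Z$. The top block of $\widetilde{L}Z$ is $\big((n-1)-\tfrac12(3n-7)\big)\1' = \tfrac{5-n}{2}\1'$. For the lower block, the cross term contributes $-\1\1' = -J$, and the $(2,2)$-block applied to $\D+J$ contributes $\tfrac{n-1}{2}\D + \tfrac{n-1}{2}J + \sum_{k}(-1)^{k}\tfrac{(n-1)-2k}{2}C_{k}\D + \sum_{k}(-1)^{k}\tfrac{(n-1)-2k}{2}C_{k}J$. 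Using $C_{k}J = 2J$ together with (P\ref{(P5)}) gives $\sum_{k}(-1)^{k}\tfrac{(n-1)-2k}{2}C_{k}J = \tfrac{2-n}{2}J$, while (P\ref{(P3)}) gives $C_{k}\D = \cir({c^{k}}'\D) = \cir(q^{k})$, so by linearity $\sum_{k}(-1)^{k}\tfrac{(n-1)-2k}{2}C_{k}\D = \cir(f)$. Since $\tfrac{n-1}{2}\D = \cir(\tfrac{n-1}{2}v')$, Lemma~\ref{f-vector} shows $\tfrac{n-1}{2}v' + f = (-1,1,\dots,1)$, whence $\tfrac{n-1}{2}\D + \cir(f) = J - 2I$. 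Collecting the leftover multiples of $J$, namely $-J + \tfrac{n-1}{2}J + \tfrac{2-n}{2}J = -\tfrac12 J$, the lower block becomes $-\tfrac12 J + (J - 2I) = -2I + \tfrac12 J$, as claimed.

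Finally, for (iii) I would substitute (i) and (ii) into $M^{-1}Z = -\tfrac12\widetilde{L}Z + \tfrac{4}{n-1}w(w'Z)$, using $M^{-1} = -\tfrac12\widetilde{L} + \tfrac{4}{n-1}ww'$ from Theorem~\ref{inversewheel}; this yields the claimed expression for $M^{-1}Z$ at once. For (iv) I would left-multiply that expression by $Z' = [\,2\1\ \ \D+J\,]$: the first block gives $\tfrac{n-5}{2}J + (\D+J)(I-\tfrac14 J)$, which simplifies via $\D J = 2(n-3)J$ and $J^{2} = (n-1)J$ to $\D + \tfrac{1-n}{4}J$; the second block gives $\tfrac{n+3}{n-1}(Z'w)\1' = \tfrac{(n+3)^{2}}{4(n-1)}J$, since $Z'w = (w'Z)' = \tfrac{n+3}{4}\1$. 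Adding the two and simplifying $\tfrac{1-n}{4} + \tfrac{(n+3)^{2}}{4(n-1)}$ using $(n+3)^{2} - (n-1)^{2} = 8(n+1)$ gives the coefficient $\tfrac{2(n+1)}{n-1}$, so $Z'M^{-1}Z = \D + \tfrac{2(n+1)}{n-1}J$.
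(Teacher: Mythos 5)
Your proposal is correct and follows essentially the same route as the paper: (i) by direct computation with $\D\1=2(n-3)\1$, (ii) by block multiplication reduced via Lemma~\ref{f-vector}, the identities $C_k\1=2\1$ and (P\ref{(P5)}) (the paper packages the circulant part as the matrix $B$ of Lemma~\ref{L,matrixB}, but that lemma is proved by exactly your computation $\tfrac{n-1}{2}v'+f$ with the constant $-\tfrac32\1'$ carried inside), and (iii), (iv) by substituting into Theorem~\ref{inversewheel} and left-multiplying by $Z'$. All the arithmetic checks out.
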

\begin{proof}
By a direct computation, we have
\begin{equation*}
    \begin{aligned}
   w'Z &= \frac{5-n}{2}
      \1'+ \frac{1}{4}\1'(\D+J).
    \end{aligned}
\end{equation*}
Since $\D\1 = 2(n-3)\1$, 
\begin{equation*}
    \begin{aligned}
   w'Z
&=
     \frac{5-n}{2}
      \1'+ \frac{n-3}{2}\1'+\frac{n-1}{4}\1'  \\
&=\frac{n+3}{4}
\1'.   
    \end{aligned}
\end{equation*}
The proof of (i) is complete. 

We now prove (ii). By (\ref{laplacianwheel}), 
\begin{equation*}
    \begin{aligned}
    \widetilde{L}Z &=     \left[\begin{array}{cc}
     X \\ Y
\end{array}\right],
    \end{aligned}
\end{equation*}
where \[X:=(n-1) \1'-\frac{1}{2}\1'(\D+J),\]
and 
\[Y:= \frac{(n-1)}{2}(\D+J)- J+\sum_{k=1}^{\frac{n}{2}-1} (-1)^k \frac{(n-1)-2k}{2} C_k(\D+J).\]
Using $\D\1 = 2(n-3)\1$, we deduce
\begin{equation*}
    \begin{aligned}
        X&=(n-1) \1'-(n-3)\1'-\frac{n-1}{2}\1'\\
        &= \frac{5-n}{2}\1'.
    \end{aligned}
\end{equation*}
By an easy verification,
\begin{equation*}
    \begin{aligned}
        Y = B+\frac{n}{2}J+\sum_{k=1}^{\frac{n}{2}-1} (-1)^k \frac{(n-1)-2k}{2}C_kJ,
    \end{aligned}
\end{equation*}
where $$B= \cir\big(\frac{n-1}{2}v'-\frac{3}{2}\1'+\sum_{k=1}^{\n-1}(-1)^{k}\frac{(n-1)-2k}{2}c_k'\D\big).$$ Since
$C_k \1 = 2\1$,
for each $k \in \{1,2,\dotsc,\n-1\}$, we have
\begin{equation}\label{matrixY}
    \begin{aligned}
        Y = B+\frac{n}{2}J+\sum_{k=1}^{\frac{n}{2}-1} (-1)^k (n-1-2k)J.
    \end{aligned}
\end{equation}
Using (P\ref{(P5)}) and Lemma \ref{L,matrixB} in (\ref{matrixY}), we have
\begin{equation*}
\begin{aligned}
    Y &= -2 I- \frac{1}{2} J+\frac{n}{2}J+\frac{2-n}{2}J \\
&= -2I+\frac{1}{2}J.
\end{aligned}
\end{equation*}
This completes the proof of (ii). 

To prove (iii), we note that by Theorem \ref{inversewheel}
\[M^{-1} = -\frac{1}{2} \widetilde{L}+\dfrac{4}{n-1}ww'.\]
Now, using the above inverse formula  along with (i) and (ii), the proof of (iii) is immediate. 

To prove (iv), we observe that
\begin{equation}\label{eq1}
    \begin{aligned}
Z'\left[\begin{array}{cc}
     \frac{n-5}{4}\1'  \\ 
     I- \frac{1}{4}J
\end{array}\right] &=   \frac{n-5}{2}J+(\D+J)(I- \frac{1}{4}J)\\ &=   \frac{n-5}{4}J+\D- \frac{1}{4}\D J.\\
    \end{aligned}
\end{equation}
Using $\D\1 = 2(n-3)\1$ in (\ref{eq1}), we have
\begin{equation}\label{eq3}
    \begin{aligned}
    Z'\left[\begin{array}{cc}
     \frac{n-5}{4}\1'  \\ 
     I- \frac{1}{4}J
\end{array}\right] &=   \frac{n-5}{4}J+\D- \frac{2(n-3)}{4}J \\ &= \D+\frac{1- n}{4}J,\\ 
    \end{aligned}
\end{equation}
Now, using (iii), we note that
\begin{equation}\label{eqn5}
    \begin{aligned}
        Z'M^{-1}Z = Z'\bigg(\left[\begin{array}{cc}
     \frac{n-5}{4}\1'  \\ 
     I- \frac{1}{4}J
\end{array}\right] +\frac{n+3}{n-1}w
\1'\bigg).
    \end{aligned}
\end{equation}
Using ($\ref{eq3}$) and (i) in (\ref{eqn5}), we get
\begin{equation*}
    \begin{aligned}
        Z'M^{-1}Z &= \D+\frac{1- n}{4}J +\frac{(n+3)^2}{4(n-1)}J\\
&=   \D+\frac{2(n+1)}{n-1}J.    \end{aligned}
\end{equation*}
This completes the proof of (iv).
\end{proof}

In our next result, we compute the Schur complement of $M$ in $D$. 
\begin{lemma}\label{Mschurcom}
Let $\M$ denote the Schur complement of $M$ in $D$. Then
\begin{enumerate}
    \item[{\rm (i)}] $\M =-2I-\frac{4}{n-1}J$.
    \item[{\rm (ii)}] $\det(\M) = (-3)2^{n-1}.$
\end{enumerate}        
\end{lemma}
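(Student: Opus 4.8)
The plan is to read off the block structure of $D$ from (\ref{D,D}) and then feed in the formula for $Z'M^{-1}Z$ already established in Lemma \ref{Minverse}(iv); after that, (i) is a one-line cancellation and (ii) is the matrix determinant lemma.

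First I would observe that, comparing with (\ref{D,D}), the distance matrix decomposes as
\[D = \left[\begin{array}{cc} M & Z \\ Z' & \D + 2(J-I) \end{array}\right],\]
where $M = \left[\begin{smallmatrix} 0 & \1' \\ \1 & \D\end{smallmatrix}\right]$ is the distance matrix of $W_n$ and $Z = \left[\begin{smallmatrix} 2\1' \\ \D + J\end{smallmatrix}\right]$ is exactly the matrix introduced in Lemma \ref{Minverse}. Since $M$ is invertible by Theorem \ref{inversewheel}, (P\ref{(P6)}) applies and the Schur complement of $M$ in $D$ is
\[\M = \big(\D + 2(J-I)\big) - Z' M^{-1} Z.\]
Now I would substitute Lemma \ref{Minverse}(iv), namely $Z'M^{-1}Z = \D + \tfrac{2(n+1)}{n-1}J$, so that the $\D$ terms cancel and
\[\M = 2J - 2I - \tfrac{2(n+1)}{n-1}J = -2I + \Big(2 - \tfrac{2(n+1)}{n-1}\Big)J = -2I - \tfrac{4}{n-1}J,\]
since $2 - \tfrac{2(n+1)}{n-1} = \tfrac{2(n-1) - 2(n+1)}{n-1} = -\tfrac{4}{n-1}$. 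This proves (i).

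For (ii), I would write $\M = -2I - \tfrac{4}{n-1}\1\1'$ and apply the matrix determinant lemma (P\ref{(P7)}) with $A = -2I$ (of order $n-1$), $u = -\tfrac{4}{n-1}\1$ and $v = \1$: one has $v'A^{-1}u = \1'(-\tfrac12 I)(-\tfrac{4}{n-1}\1) = \tfrac{2}{n-1}\1'\1 = 2$, hence $\det(\M) = (1 + 2)\det(-2I) = 3(-2)^{n-1}$. Because $n$ is even, $n-1$ is odd, so $(-2)^{n-1} = -2^{n-1}$ and $\det(\M) = (-3)2^{n-1}$. (Equivalently, $J$ of order $n-1$ has eigenvalue $n-1$ once and $0$ with multiplicity $n-2$, so $\M$ has eigenvalues $-6$ once and $-2$ with multiplicity $n-2$, giving $\det(\M) = -6\cdot(-2)^{n-2} = (-3)2^{n-1}$.) I do not expect any genuine obstacle here: all the work has been front-loaded into Lemma \ref{Minverse}(iv), and the only points requiring care are the bookkeeping of the all-ones blocks and tracking the parity of $n$ in the final sign.
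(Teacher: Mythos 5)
Your proposal is correct and follows essentially the same route as the paper: block-decompose $D$ with $M$ as the leading principal $n\times n$ block, invoke Lemma \ref{Minverse}(iv) to cancel the $\D$ terms for part (i), and apply the matrix determinant lemma (P\ref{(P7)}) with $J=\1\1'$ for part (ii). All the arithmetic, including the sign from $(-2)^{n-1}$ with $n-1$ odd, checks out.
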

\begin{proof}
By (P\ref{(P4)}), we have
\begin{equation*}
    D = \left[\begin{array}{ccc}
         0& \1'  & 2\1'\\
         \1& \D & \D+J\\
         2 \1& \D+J & \D+2(J-I)
    \end{array}\right].
\end{equation*}
Since $\M$ is the Schur complement of $M$ in $D$, by (P\ref{(P6)})
\begin{equation}\label{Mschur}
    \M = \D+2(J-I)- Z'M^{-1}Z,
\end{equation} 
where $Z = \left[\begin{array}{cc}
     2 \1' \\ \D+J
\end{array}\right]$.
Using item (iv) of Lemma \ref{Minverse} in (\ref{Mschur}), we have
\begin{equation*}
    \begin{aligned}
    \M = -2I-\frac{4}{n-1}J.
    \end{aligned}
\end{equation*}
The proof of (i) is complete. 

We now prove (ii). Using (P\ref{(P7)}) and (i), we note that
\begin{equation*}
    \begin{aligned}
        \det(\M) 
        &= \big(1-\frac{4}{n-1}\1'(-2I)^{-1}\1\big)\det{(-2I)}\\
         &= (-3)2^{n-1}.
    \end{aligned}
\end{equation*}
This completes the proof of (ii).
\end{proof}

\begin{theorem}\label{det}
If $D$ is the distance matrix of $H_n$, then 
\[\det(D) = 3(n-1)2^{n-1}.\]
\end{theorem}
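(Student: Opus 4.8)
The plan is to use the Schur complement identity from (P\ref{(P6)}) together with the two lemmas just proved. Since $M$ is the leading principal $n \times n$ submatrix of $D$ (by (P\ref{(P4)})) and $M$ is nonsingular (its determinant can be read off Theorem \ref{inversewheel}, or computed directly), the block factorization gives
\[
\det(D) = \det(M)\,\det(\M),
\]
where $\M$ is the Schur complement of $M$ in $D$. Lemma \ref{Mschurcom}(ii) already supplies $\det(\M) = (-3)2^{n-1}$, so it remains only to compute $\det(M)$, the determinant of the distance matrix of the wheel graph $W_n$.

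First I would compute $\det(M)$. The cleanest route is to invoke Theorem \ref{inversewheel}, which expresses $M^{-1}$ explicitly; however, extracting $\det(M)$ from that formula requires knowing $\det(\widetilde L)$, which is a minor digression. A more direct approach is to apply the same Schur-complement idea one level down: write $M = \left[\begin{smallmatrix} 0 & \1' \\ \1 & \D \end{smallmatrix}\right]$ and take the Schur complement of the nonsingular block $\D$, or alternatively take $-\1'\D^{-1}\1$ after noting $\det \D$. Either way one needs the determinant of the circulant $\D = \cir(v')$ with $v = (0,1,2,\dots,2,1)'$, which is computed via its eigenvalues $\sum_j v_j \omega^{j}$ over $(n-1)$-th roots of unity $\omega$; the identity (P\ref{(P5)}) is exactly the tool for evaluating the relevant alternating sums. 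Since $\det(D) = 3(n-1)2^{n-1}$ is the target and $\det(\M) = (-3)2^{n-1}$, we must have $\det(M) = -(n-1)$; I would verify this value (it matches the known wheel-graph distance determinant for even $n$ from \cite{BALAJI2021274}) and then simply multiply.

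The main obstacle is the bookkeeping needed to pin down $\det(M) = -(n-1)$ cleanly: the circulant eigenvalue computation for $\det(\D)$ and the subsequent rank-one correction $-\1'\D^{-1}\1$ involve the alternating sums controlled by (P\ref{(P5)}), and one must be careful that $\D$ is indeed invertible for even $n$ (this is where the parity hypothesis enters, since for odd $n$ the analogous object is singular). Once $\det(M)$ is in hand, the rest is a one-line multiplication:
\[
\det(D) = \det(M)\det(\M) = -(n-1)\cdot(-3)2^{n-1} = 3(n-1)2^{n-1}.
\]
Given that the paper has already done the heavy lifting in Lemmas \ref{Minverse} and \ref{Mschurcom}, I expect the author's proof to simply cite the wheel-graph determinant for $\det(M)$ from \cite{BALAJI2021274} and combine it with Lemma \ref{Mschurcom}(ii), making the argument quite short.
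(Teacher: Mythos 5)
Your proposal is correct and matches the paper's proof: the author likewise writes $\det(D)=\det(M)\det(\M)$ via the Schur complement, takes $\det(\M)=(-3)2^{n-1}$ from Lemma \ref{Mschurcom}(ii), and cites the known wheel-graph result $\det(M)=1-n$ (from Theorem 7 of \cite{zhang}) rather than recomputing it. The circulant-eigenvalue digression in your sketch is unnecessary, as you anticipated.
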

\begin{proof}
Using (P\ref{(P6)}) and the fact that $\M$ is the Schur complement of $M$ in $D$, we have
\begin{equation}\label{dethelmdist}
    \det(D) = \det(M)\det(\M).
\end{equation}
Recall that $M$ is the distance matrix of $W_n$. By  Theorem 7 in \cite{zhang}, we have
\begin{equation}\label{detwheeldist}
    \det(M) = 1-n.
\end{equation}
By (\ref{dethelmdist}), (\ref{detwheeldist}), and Lemma \ref{Mschurcom}, we get 
\begin{equation*}
    \det(D) =  3(n-1)2^{n-1}.
\end{equation*}
The proof is complete.
\end{proof}

\subsection{Inverse Formula}

For distance matrices of $H_4$ and $H_6$, we now compute the inverse directly and give the formulas. 

\begin{enumerate}
    \item[$\bullet$] Consider $H_4$. Then $c^1=(0,1,1)'$.
The special Laplacian for $H_4$ can now be written easily using the definition:
\begin{equation*}
\begin{aligned}
    \L = \frac{1}{2}\left[\begin{array}{ccccccccccc}
~~3 & -1 & -1 & -1 & ~~0 & ~~0 & ~~0 \\
-1 & ~~5 & -1 & -1 & -2 & ~~0 & ~~0 \\
-1 & -1 & ~~5 & -1 & ~~0 & -2 & ~~0 \\
-1 & -1 & -1 & ~~5 & ~~0 & ~~0 & -2 \\
~~0 & -2 & ~~0 & ~~0 & ~~2 & ~~0 & ~~0 \\
~~0 & ~~0 & -2 & ~~0 & ~~0 & ~~2 & ~~0 \\
~~0 & ~~0 & ~~0 & -2 & ~~0 & ~~0 & ~~2\end{array}\right].
\end{aligned}
\end{equation*}
The distance matrix $D$ of $H_4$ is given by
\begin{equation*} 
      D=\left[
{\begin{array}{rrrrrrr}
0 & 1 & 1 & 1 & 2 & 2 & 2 \\
1 & 0 & 1 & 1 & 1 & 2 & 2 \\
1 & 1 & 0 & 1 & 2 & 1 & 2 \\
1 & 1 & 1 & 0 & 2 & 2 & 1 \\
2 & 1 & 2 & 2 & 0 & 3 & 3 \\
2 & 2 & 1 & 2 & 3 & 0 & 3 \\
2 & 2 & 2 & 1 & 3 & 3 & 0\end{array}}
\right].
\end{equation*}
By setting $u:=\frac{1}{4}(1,-1,-1,-1,2,2,2)'$, we note that 
\begin{equation} \label{h4}
-\frac{1}{2} \L+\frac{4}{3(n-1)}uu'= \frac{1}{18}\left[\begin{array}{cccccccccc}
-13 & ~~4 & ~~4 & ~~4 & ~~1 & ~~1 & ~~1 \\
~~4 & -22 & ~~5 & ~~5 & ~~8 & -1 & -1 \\
~~4 & ~~5 & -22 & ~~5 & -1 & ~~8 & -1 \\
~~4 & ~~5 & ~~5 & -22 & -1 & -1 & ~~8 \\
~~1 & ~~8 & -1 & -1 & -7 & ~~2 & ~~2 \\
~~1 & -1 & ~~8 & -1 & ~~2 & -7 & ~~2 \\
~~1 & -1 & -1 & ~~8 & ~~2 & ~~2 & -7\end{array}\right].
\end{equation}
The inverse of $D$ and the matrix in the right hand side of $(\ref{h4})$ are equal. This can be verified directly.

\item[$\bullet$] 
Consider the graph $H_6$ given in Figure \ref{fig_H6}. We see that 
\[c^1=(0,1,0,0,1)'~\mbox{and} ~c^2=(0,0,1,1,0)'.\]
The special Laplacian for $H_6$ can now be written easily using the definition:
\begin{equation*}
\begin{aligned}
    \L = \frac{1}{2}\left[\begin{array}{ccccccccccc}
~~5 & -1 & -1 & -1 & -1 & -1 & ~~0 & ~~0 & ~~0 & ~~0 & ~~0 \\
-1 & ~~7 & -3 & ~~1 & ~~1 & -3 & -2 & ~~0 & ~~0 & ~~0 & ~~0 \\
-1 & -3 & ~~7 & -3 & ~~1 & ~~1 & ~~0 & -2 & ~~0 & ~~0 & ~~0 \\
-1 & ~~1 & -3 & ~~7 & -3 & ~~1 & ~~0 & ~~0 & -2 & ~~0 & ~~0 \\
-1 & ~~1 & ~~1 & -3 & ~~7 & -3 & ~~0 & ~~0 & ~~0 & -2 & ~~0 \\
-1 & -3 & ~~1 & ~~1 & -3 & ~~7 & ~~0 & ~~0 & ~~0 & ~~0 & -2 \\
~~0 & -2 & ~~0 & ~~0 & ~~0 & ~~0 & ~~2 & ~~0 & ~~0 & ~~0 & ~~0 \\
~~0 & ~~0 & -2 & ~~0 & ~~0 & ~~0 & ~~0 & ~~2 & ~~0 & ~~0 & ~~0 \\
~~0 & ~~0 & ~~0 & -2 & ~~0 & ~~0 & ~~0 & ~~0 & ~~2 & ~~0 & ~~0 \\
~~0 & ~~0 & ~~0 & ~~0 & -2 & ~~0 & ~~0 & ~~0 & ~~0 & ~~2 & ~~0 \\
~~0 & ~~0 & ~~0 & ~~0 & ~~0 & -2 & ~~0 & ~~0 & ~~0 & ~~0 & ~~2\end{array}\right].
\end{aligned}
\end{equation*}
The distance matrix $D$ of $H_6$ is given by
\begin{equation*} 
      D=\left[
{\begin{array}{rrrrrrrrrrrrrrrrrr}
0 & 1 & 1 & 1 & 1 & 1 & 2 & 2 & 2 & 2 & 2 \\
1 & 0 & 1 & 2 & 2 & 1 & 1 & 2 & 3 & 3 & 2 \\
1 & 1 & 0 & 1 & 2 & 2 & 2 & 1 & 2 & 3 & 3 \\
1 & 2 & 1 & 0 & 1 & 2 & 3 & 2 & 1 & 2 & 3 \\
1 & 2 & 2 & 1 & 0 & 1 & 3 & 3 & 2 & 1 & 2 \\
1 & 1 & 2 & 2 & 1 & 0 & 2 & 3 & 3 & 2 & 1 \\
2 & 1 & 2 & 3 & 3 & 2 & 0 & 3 & 4 & 4 & 3 \\
2 & 2 & 1 & 2 & 3 & 3 & 3 & 0 & 3 & 4 & 4 \\
2 & 3 & 2 & 1 & 2 & 3 & 4 & 3 & 0 & 3 & 4 \\
2 & 3 & 3 & 2 & 1 & 2 & 4 & 4 & 3 & 0 & 3 \\
2 & 2 & 3 & 3 & 2 & 1 & 3 & 4 & 4 & 3 & 0\end{array}}
\right].
\end{equation*}
Setting $u:=\frac{1}{4}(-1,-1,-1-1,-1,-1,2,2,2,2,2)'$, the matrix $-\frac{1}{2} \L+\frac{4}{3(n-1)}uu'$ is given by
\begin{equation*} \label{w5}
\frac{1}{30}\left[\begin{array}{ccccccccccccccc}
-37 & ~~8 & ~~8 & ~~8 & ~~8 & ~~8 & -1 & -1 & -1 & -1 & -1 \\
~~8 & -52 & ~~23 & -7 & -7 & ~~23 & ~~14 & -1 & -1 & -1 & -1 \\
~~8 & ~~23 & -52 & ~~23 & -7 & -7 & -1 & ~~14 & -1 & -1 & -1 \\
~~8 & -7 & ~~23 & -52 & ~~23 & -7 & -1 & -1 & ~~14 & -1 & -1 \\
~~8 & -7 & -7 & ~~23 & -52 & ~~23 & -1 & -1 & -1 & ~~14 & -1 \\
~~8 & ~~23 & -7 & -7 & ~~23 & -52 & -1 & -1 & -1 & -1 & ~~14 \\
-1 & ~~14 & -1 & -1 & -1 & -1 & -13 & ~~2 & ~~2 & ~~2 & ~~2 \\
-1 & -1 & ~~14 & -1 & -1 & -1 & ~~2 & -13 & ~~2 & ~~2 & ~~2 \\
-1 & -1 & -1 & ~~14 & -1 & -1 & ~~2 & ~~2 & -13 & ~~2 & ~~2 \\
-1 & -1 & -1 & -1 & ~~14 & -1 & ~~2 & ~~2 & ~~2 & -13 & ~~2 \\
-1 & -1 & -1 & -1 & -1 & ~~14 & ~~2 & ~~2 & ~~2 & ~~2 & -13\end{array}\right].
\end{equation*}
It can be easily verified that the inverse of $D$ and the above matrix are equal. 
\end{enumerate}

In the rest of the paper, we assume $n \geq 8$ is even. 
The following result gives a simple expression for the matrix $\L D$.
\begin{lemma}\label{T,LD}
If $\L$ is the special Laplacian matrix defined in $(\ref{D,Lap})$ and $D$ is the distance matrix of $H_n$, then
\[\L D = \left[\begin{array}{ccc}
        \frac{1-n}{2} & \frac{5-n}{2}\1' & \frac{5-n}{2}\1'\\
        \\
         -\frac{1}{2}\1& B& 2I+B\\
         \\
         \1 & J &J-2I
    \end{array}\right],\]
    where $$B = \cir\big(\frac{n-1}{2}v'-\frac{3}{2}\1'+\sum_{k=1}^{\n-1}(-1)^{k}\frac{(n-1)-2k}{2}c_k'\D\big).$$
\end{lemma}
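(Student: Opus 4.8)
The approach is a straightforward block-matrix computation. I would partition both $\L$ (from (\ref{D,Lap})) and $D$ (from (\ref{D,D})) into the same $3\times 3$ block pattern with blocks of sizes $1$, $n-1$, $n-1$, and then read off the nine blocks of the product $\L D$ one at a time. Writing $C := \sum_{k=1}^{\n-1}(-1)^{k}\frac{(n-1)-2k}{2}C_k$ for the circulant matrix occurring in the central block of $\L$, the three block rows of $\L$ are $\big[\tfrac{n-1}{2},\ -\tfrac12\1',\ 0\big]$, $\big[-\tfrac12\1,\ \tfrac{n+1}{2}I+C,\ -I\big]$ and $\big[0,\ -I,\ I\big]$, so every block of $\L D$ is a short linear combination of $\1$, $\1'$, $J$, $\D$ and of $C$ applied to these.

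The only inputs I would need are identities already recorded in the preliminaries: $\D\1 = 2(n-3)\1$ (hence $\1'\D = 2(n-3)\1'$ by symmetry of $\D$); $C_k\1 = 2\1$ for each $k$ (hence $C_kJ = 2J$, and, via (P\ref{(P5)}), $C\1 = \tfrac{2-n}{2}\1$ and $CJ = \tfrac{2-n}{2}J$); $\1'\1 = n-1$ and $\1'J = (n-1)\1'$; and the circulant product rule $\cir(s')C = \cir(s'C)$ from (P\ref{(P3)}). With these in hand, the first block row, the first block column, and the entire third block row of $\L D$ fall out by inspection: the $(1,1)$ entry is $-\tfrac12\1'\1 = \tfrac{1-n}{2}$; the $(1,2)$ and $(1,3)$ blocks both collapse to $\tfrac{5-n}{2}\1'$ once the $J$-contributions cancel; the $(2,1)$ block is $\big(\tfrac{n+1}{2}+\tfrac{2-n}{2}-2\big)\1 = -\tfrac12\1$; and the $(3,1),(3,2),(3,3)$ blocks are immediate since that block row of $\L$ involves only $\pm I$.

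The one step that is slightly more than bookkeeping is identifying the central $(2,2)$ and $(2,3)$ blocks with $B$ and $B+2I$. For $(2,2)$ I would simplify $-\tfrac12 J + \big(\tfrac{n+1}{2}I+C\big)\D - (\D+J)$ to $\tfrac{n-1}{2}\D + C\D - \tfrac32 J$ and then recognize it as $B$ by rewriting each summand as a circulant: $\tfrac{n-1}{2}\D = \cir(\tfrac{n-1}{2}v')$, $-\tfrac32 J = \cir(-\tfrac32\1')$, and, crucially, $C\D = \sum_k(-1)^{k}\frac{(n-1)-2k}{2}C_k\D = \cir\big(\sum_k(-1)^{k}\frac{(n-1)-2k}{2}{c^k}'\D\big)$, which is exactly the sum in the definition of $B$. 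For $(2,3)$ the analogous reduction, now also using $CJ = \tfrac{2-n}{2}J$, makes all the $J$-terms combine to $-\tfrac32 J$ and leaves an extra $+2I$, giving $B+2I$. The main obstacle, such as it is, is purely organizational: one must keep the circulant bookkeeping consistent so that the $C\D$ term lands on precisely the summand defining $B$, and track the several multiples of $J$ without sign slips; nothing beyond (P\ref{(P3)})--(P\ref{(P5)}) and $\D\1 = 2(n-3)\1$ is required. I would present the argument as one block-by-block verification, handling $(1,1),(1,2),(1,3)$ and $(3,1),(3,2),(3,3)$ first and then $(2,1),(2,2),(2,3)$.
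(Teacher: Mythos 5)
Your proposal is correct and follows essentially the same route as the paper: a block-by-block computation of $\L D$ using $\D\1=2(n-3)\1$, $C_k\1=2\1$, the identity (P\ref{(P5)}), and the circulant product rule to recognize the central blocks as $B$ and $2I+B$. The paper organizes the middle block row as intermediate matrices $E$, $F$, $G$ and simplifies each, which is the same bookkeeping you describe.
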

\begin{proof}
Using $(\ref{D,D})$ and $(\ref{D,Lap})$, we compute
\begin{equation}\label{matrixLD}
    \begin{aligned}
        \L D &=  \left[\begin{array}{ccc}
        \frac{1-n}{2} & A & A\\
         E& F& G\\
         \1 & J &J-2I
    \end{array}\right],
    \end{aligned}
\end{equation}
where 
\[A := \frac{n-1}{2}\1'-\frac{1}{2}\1'\D,\]
\[E := \frac{n-3}{2}\1+\sum_{k=1}^{\n-1}(-1)^{k}\frac{n-1-2k}{2}C_k\1,\]
\[F := -\frac{3}{2}J+\frac{n-1}{2}\D+\sum_{k=1}^{\n-1}(-1)^{k}\frac{n-1-2k}{2} C_k\D,\]
and 
\[G := 2I+\frac{n-5}{2}J+\frac{n-1}{2}\D+\sum_{k=1}^{\n-1}(-1)^{k}\frac{n-1-2k}{2}(C_k\D+2J).\]
As $\D\1 = 2(n-3)\1$, we get
\begin{equation*}\label{matrixA}
    A = \frac{5-n}{2} \1'.
\end{equation*}
Each $C_k$ is a circulant matrix specified by a vector in $\rr^{n-1}$ with exactly two ones and remaining entries equal to zero. Therefore, $C_k \1 = 2 \1$, for each $k \in \{1,2,\dotsc,\n-1\}$. Thus 
\begin{equation*}
    \begin{aligned}
        E &= \frac{n-3}{2}\1+\sum_{k=1}^{\n-1}(-1)^{k}{(n-1-2k)}\1. \\
    \end{aligned}
\end{equation*}
Now, using the identity in (P\ref{(P5)}), we get
\begin{equation*}\label{matrixE}
    \begin{aligned}
        E &= \frac{n-3}{2}\1+\frac{2-n}{2}\1 = -\frac{1}{2}\1. \\
    \end{aligned}
\end{equation*}
Since $\D=\cir (v' )$, $\1 \1'=\cir(\1')$ and $C_k\D=\cir({c^k}^{'} \D)$, we get
\begin{equation*}\label{matrixF}
    F = \cir\big(\frac{n-1}{2}v'-\frac{3}{2}\1'+\sum_{k=1}^{\n-1}(-1)^{k}\frac{(n-1)-2k}{2}c_k'\D\big) = B,
\end{equation*}
and 
\begin{equation*}
    \begin{aligned}
        G = 2I+\frac{n-5}{2}J+\sum_{k=1}^{\n-1}(-1)^{k}{(n-1-2k)}J+\cir\big(\frac{n-1}{2}v'+\sum_{k=1}^{\n-1}(-1)^{k}\frac{(n-1)-2k}{2}c_k'\D\big).
    \end{aligned}
\end{equation*}
Using (P\ref{(P5)}), we have
\begin{equation*}
    \begin{aligned}
        G &= 2I+\big(\frac{n-5}{2}+\frac{2-n}{2}\big)J+\cir\big(\frac{n-1}{2}v'+\sum_{k=1}^{\n-1}(-1)^{k}\frac{(n-1)-2k}{2}c_k'\D\big) \\
        &= 2I-\frac{3}{2}J+\cir\big(\frac{n-1}{2}v'+\sum_{k=1}^{\n-1}(-1)^{k}\frac{(n-1)-2k}{2}c_k'\D\big).
    \end{aligned}
\end{equation*}
Since $J = \cir(\1')$, we get
\begin{equation*}\label{matrixG}
    \begin{aligned}
        G &= 2I+\cir\big(\frac{n-1}{2}v'-\frac{3}{2}\1'+\sum_{k=1}^{\n-1}(-1)^{k}\frac{(n-1)-2k}{2}c_k'\D\big) \\
        &= 2I+B.
    \end{aligned}
\end{equation*}
The proof is complete by substituting the values of $A$, $E$, $F$ and $G$ in (\ref{matrixLD}).
\end{proof}

We now state and prove our main theorem.
\begin{theorem} $\label{inverse}$
Let $n \geq 4$ be even. If $D$ and $\L$ are the distance  and special Laplacian matrices of $H_n$, respectively, then
\[D^{-1} = -\frac{1}{2}\L+\frac{4}{3(n-1)}uu'.\]
where $u:=\frac{1}{4}(5-n,-1,\dotsc,-1,2,\dotsc,2)' \in \rr^{m}$ and $\L$ is defined in section \ref{laplacian}.
\end{theorem}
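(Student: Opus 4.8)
The plan is to verify the identity by a single matrix multiplication. By Theorem \ref{det} we have $\det(D) = 3(n-1)2^{n-1} \neq 0$, so $D$ is invertible and it suffices to show
\[
\Big({-}\tfrac{1}{2}\L + \tfrac{4}{3(n-1)}uu'\Big)D = I_m,
\qquad\text{i.e.}\qquad
-\tfrac{1}{2}\,\L D + \tfrac{4}{3(n-1)}\,u\,(u'D) = I_m .
\]
The cases $n=4$ and $n=6$ have already been checked directly above, so from now on $n \geq 8$ and Lemma \ref{T,LD} is available.

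The first ingredient is the row vector $u'D$. Writing $u' = \tfrac14(5-n,\,-\1',\,2\1')$ and using the block form (\ref{D,D}) of $D$ together with $\D\1 = 2(n-3)\1$ (hence $\1'\D = 2(n-3)\1'$) and $\1'J = (n-1)\1'$, I would compute the three blocks of $u'D$ separately; after the cancellations each block reduces to $\tfrac{3(n-1)}{4}$ times a row of ones, so that $u'D = \tfrac{3(n-1)}{4}\,\1_m'$. This is the crucial point: it makes the rank-one correction collapse, $\tfrac{4}{3(n-1)}\,u\,(u'D) = u\,\1_m'$, and the block form of $u\1_m'$ is then read off immediately from the definition of $u$.

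The second ingredient is Lemma \ref{T,LD}, which together with Lemma \ref{L,matrixB} (giving $B = -2I - \tfrac12 J$) expresses $\L D$ as the explicit block matrix
\[
\L D = \left[\begin{array}{ccc}
\tfrac{1-n}{2} & \tfrac{5-n}{2}\1' & \tfrac{5-n}{2}\1' \\
-\tfrac12\1 & -2I - \tfrac12 J & -\tfrac12 J \\
\1 & J & J - 2I
\end{array}\right].
\]
Then $-\tfrac12\,\L D$ is obtained by halving and changing sign, and the proof finishes by a block-by-block comparison establishing $-\tfrac12\,\L D = I_m - u\1_m'$; each of the nine blocks is a one-line identity in $I$, $J$, $\1$ and the scalars $\tfrac{n-1}{4}$, $\tfrac{n-5}{4}$ (for instance, both the $(2,2)$ block of $-\tfrac12\L D$ and that of $I_m - u\1_m'$ equal $I + \tfrac14 J$).

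I do not expect a genuine obstacle here: the only step that requires an idea rather than bookkeeping is the identity $u'D = \tfrac{3(n-1)}{4}\,\1_m'$, and most of the remaining work is already packaged in Lemma \ref{T,LD}. Care is only needed to keep track of the three block sizes ($1$, $n-1$, $n-1$) and the signs of the entries of $u$ when assembling $u\1_m'$.
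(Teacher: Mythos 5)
Your proposal is correct and follows essentially the same route as the paper: it relies on Lemmas \ref{L,matrixB} and \ref{T,LD} to get the explicit block form of $\L D$, establishes the key identity $\L D + 2I_m = 2u\1_m'$ (which you phrase equivalently as $-\tfrac12 \L D = I_m - u\1_m'$), and uses $u'D = \tfrac{3(n-1)}{4}\1_m'$ (the transpose of the paper's computation of $Du$) to collapse the rank-one term. The block-by-block identities you cite all check out, so no gap remains.
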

\begin{proof}
The case when $n=4$ and $n=6$ are already discussed. We assume $n \geq 8$. In view of Lemma \ref{L,matrixB} and \ref{T,LD}, 
\[\L D=
\left[\begin{array}{ccc}
        -\frac{n-1}{2} & \frac{5-n}{2}\1' & \frac{5-n}{2}\1'\\
        \\
         -\frac{1}{2}\1& B& 2I+B\\
         \\
         ~~~~~\1 & J &J-2I
    \end{array}\right],\]
where
 \[B=-2 I- \frac{1}{2} J.\]
By setting $u=\frac{1}{4}(5-n,-1,\dotsc,-1,2,\dotsc,2)'$, we deduce that
\begin{equation} \label{ld2}
\begin{aligned}
    \L D+2I_{m} &= 2u\1_{m}'.\\ 
\end{aligned}
\end{equation}
Next, we compute 
\begin{equation*} 
\begin{aligned}
Du&=\frac{1}{4}\left[\begin{array}{ccc}
         3\1'\1\\
         (5-n)\1+\D\1+2J\1\\
         2(5-n)\1+\D\1+3J\1-4\1
    \end{array}\right].    
\end{aligned}
\end{equation*}
Since $\D\1 = 2(n-3)\1$, we get
\begin{equation} \label{Du}
\begin{aligned}
Du&=\frac{3(n-1)}{4}\1_{m}.
\end{aligned}
\end{equation}
By (\ref{ld2}) and (\ref{Du}), we have
\begin{equation*}
    \begin{aligned}
        \big(-\frac{1}{2}L+\frac{4}{3(n-1)}uu'\big)D&=-\frac{1}{2}LD+\frac{4}{3(n-1)}uu'D\\ &=-\frac{1}{2}(2u\1_{m}'-2I_{m})+u\1_{m}' \\&=I_m.
    \end{aligned}
\end{equation*}
Hence
\[D^{-1} = -\frac{1}{2}\L+\frac{4}{3(n-1)}uu'.\]
The proof is complete.
\end{proof}

\subsection{Properties of the special Laplacian}
In this section, we show that $\L$ is a positive semidefinite matrix, has rank $m-1$ and all row sums equal to zero. We also show that all the cofactors of $\mathcal{L}$ are equal to $2^{n-3}$. The proofs of all the results in this section follows from the same technique used in \cite{BALAJI2021274}. We include the proofs here for completion.
\begin{theorem}
Row and column sums of $\L$ are zero and $\rank(\L)=m-1$.  
\end{theorem}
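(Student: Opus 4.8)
The plan is to establish the two assertions separately, starting with the row sums and then using the inverse formula from Theorem \ref{inverse} to pin down the rank. For the row sums, the cleanest route is to compute $\L\1_m$ directly from the block form \eqref{D,Lap}. The first summand is $\frac12$ times the matrix whose row sums are, block by block, $(n-1) - (n-1) + 0 = 0$ in the first row, $-1 + (n+1) - 2 = n - 2$ in each middle row, and $0 - 2 + 2 = 0$ in each bottom row; so that does not vanish on its own. The correction term involves $C_k\1 = 2\1$ for each $k$, contributing $\sum_{k=1}^{n/2-1}(-1)^k\frac{(n-1)-2k}{2}\cdot 2 = \sum_{k=1}^{n/2-1}(-1)^k\big((n-1)-2k\big)$, which by (P\ref{(P5)}) equals $\frac{2-n}{2}$; placed in the middle block this exactly cancels the leftover $\frac12(n-2)$. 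Hence $\L\1_m = \0$, and by symmetry of $\L$ the column sums vanish too. (Alternatively, one could invoke Lemma \ref{T,LD} together with the fact that $D$ is invertible: from $\L D$ having an explicit form, right-multiplying by $D^{-1}\1_m$ recovers $\L\1_m$, but the direct computation is shorter.)

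For the rank, I would argue that $\rank(\L) = m-1$ in two halves. The inequality $\rank(\L)\le m-1$ is immediate from $\L\1_m = \0$. For the reverse inequality, I would use Theorem \ref{inverse}: since $D^{-1} = -\frac12\L + \frac{4}{3(n-1)}uu'$ and $D^{-1}$ is invertible, we have $\operatorname{rank}\big(-\frac12\L + \frac{4}{3(n-1)}uu'\big) = m$. A rank-one perturbation can increase rank by at most one, so $\rank(\L)\ge m-1$. Combining the two bounds gives $\rank(\L) = m-1$ exactly. One should check that the perturbation genuinely raises the rank rather than $\L$ already being full rank — but that is ruled out by $\L\1_m=\0$, so no subtlety arises here.

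The step I expect to require the most care is the bookkeeping in the row-sum computation: one must correctly track that the circulant correction lands only in the middle block of size $n-1$, apply the identity (P\ref{(P5)}) with the right sign, and confirm the cancellation against the $\frac12(n+1) - \frac12 - 1$ that comes from the middle row of the first matrix. Everything else is formal. If one prefers to avoid even that, the quickest self-contained alternative is: Lemma \ref{T,LD} exhibits $\L D$ with first block-row $\big(\tfrac{1-n}{2},\ \tfrac{5-n}{2}\1',\ \tfrac{5-n}{2}\1'\big)$, middle block-rows $\big(-\tfrac12\1,\ B,\ 2I+B\big)$ with $B = -2I-\tfrac12 J$, and bottom block-rows $\big(\1,\ J,\ J-2I\big)$; summing each row of $\L D$ and noting $D\1_m$ has a constant value on each of the three blocks, one solves a small linear system to see $\L D \1_m$ forces $\L\1_m = \0$. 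I would present the direct computation as the main proof and relegate the rank argument to the last two lines.
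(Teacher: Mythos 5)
Your proof is correct, but it follows a genuinely different route from the paper's. The paper derives both claims from the single identity $\L D+2I_m=2u\1_m'$ (equation (\ref{ld2}), established inside the proof of Theorem \ref{inverse}): since $\1_m'u=1$, that identity gives $\1_m'\L D=0$, and invertibility of $D$ then yields zero row and column sums; for the rank, the paper shows any $p$ with $p'\L D=0$ must satisfy $p'=(p'u)\1_m'$, so the nullity of $\L D$ is exactly one and $\rank(\L)=\rank(\L D)=m-1$. You instead compute $\L\1_m$ directly from the definition (\ref{D,Lap}), using $C_k\1=2\1$ and the identity (P\ref{(P5)}) to cancel the middle-block residue $\tfrac{n-2}{2}$ against $\tfrac{2-n}{2}$ --- this checks out, and has the mild advantage of being self-contained (it needs only the definition of $\L$, not the inverse formula) and of working uniformly for all even $n\ge 4$ rather than deferring $n=4,6$ to the verified examples. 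Your rank argument via subadditivity, $m=\rank\bigl(-\tfrac12\L+\tfrac{4}{3(n-1)}uu'\bigr)\le\rank(\L)+1$, combined with $\L\1_m=\0$, is clean and correct; note that your closing worry about whether the perturbation ``genuinely raises the rank'' is unnecessary, since the two inequalities already pin down the rank. The only weak spot is the parenthetical alternative for the row sums: knowing $\L D\1_m$ does not by itself determine $\L\1_m$ (one would need $D^{-1}\1_m$, which equals $\tfrac{4}{3(n-1)}u$ by (\ref{Du})); but since you present the direct computation as the main argument, this does not affect the validity of the proof.
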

\begin{proof}
From equation $(\ref{ld2})$, we have
\begin{equation}\label{ld2copy} 
\L D +2 I_{m}=2 u\1_{m}'.
\end{equation}
Since $u=\frac{1}{4}(5-n,-1,\dotsc,-1,2,\dotsc,2)' \in \rr^{m}$, we have
 \[\1_{m}'u=\frac{5-n}{4}-\frac{n-1}{4}+\frac{2(n-1)}{4} = 1.\]
Thus, equation (\ref{ld2copy}) gives $\1' \L D=0$.
Let $p \in \rr^m$ be a non-zero vector such that $p' \L D=0$.
In view of (\ref{ld2copy}),  we have
\[p'(-2I_{m}+2u\1_{m}') = 0.\]
This gives
\[p'=(p'u)\1_{m}'.\]
So, $p$ is a multiple of $\1_{m}$. Thus,
nullity of $\L D$ is one and hence $\rank(\L D)=m-1$.
As $D$ is non-singular, $\rank(\L)=m-1$.
Since $\L$ is symmetric and $\1_{m}' \L D=0$ if and only if $\1_{m}' \L=0$, all the row and column sums of $\L$ are zero. The proof is complete.
\end{proof}

\begin{theorem}
$\L$ is positive semidefinite.
\end{theorem}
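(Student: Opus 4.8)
The plan is to combine the inverse formula of Theorem~\ref{inverse} with the fact, proved in \cite{Jakli}, that the distance matrix of a helm graph is a Euclidean distance matrix. Rearranging the inverse formula gives
\[
-\tfrac12\L \;=\; D^{-1}-\frac{4}{3(n-1)}uu'.
\]
Since $D$ is a Euclidean distance matrix, it has exactly one positive eigenvalue (see (P\ref{(P8)})); and by Theorem~\ref{det} we have $\det(D)=3(n-1)2^{n-1}\neq 0$, so $0$ is not an eigenvalue of $D$. Hence $D$, and therefore $D^{-1}$, has exactly one positive eigenvalue and $m-1$ strictly negative eigenvalues.

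Next I would compare the spectra of $-\tfrac12\L$ and $D^{-1}$. Ordering eigenvalues non-increasingly, the perturbation $-\frac{4}{3(n-1)}uu'$ is negative semidefinite with largest eigenvalue $0$, so Weyl's inequality (equivalently, the interlacing theorem for a rank-one downdate) yields
\[
\lambda_i\!\left(-\tfrac12\L\right)\;\le\;\lambda_i\!\left(D^{-1}\right)\qquad\text{for }i=1,\dots,m.
\]
In particular, for $i\geq 2$ the right-hand side is strictly negative, so $\lambda_i\!\left(-\tfrac12\L\right)<0$ for $i=2,\dots,m$.

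To finish, I would use that the row sums of $\L$ vanish, so $\L\1_m=0$ and hence $0$ is an eigenvalue of $-\tfrac12\L$; since every eigenvalue other than the largest is strictly negative, the largest eigenvalue is exactly $0$ (alternatively, this follows from $\rank(\L)=m-1$). Therefore every eigenvalue of $-\tfrac12\L$ is at most $0$, i.e.\ $-\tfrac12\L$ is negative semidefinite, which is equivalent to $\L$ being positive semidefinite. The one step needing care is the eigenvalue count: one must invoke the correct direction of Weyl's inequality and combine it correctly with both the known inertia of $D^{-1}$ and the identity $\L\1_m=0$; the remainder is routine.
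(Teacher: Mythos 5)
Your proof is correct, but it takes a genuinely different route from the paper. You work directly with the inverse formula $-\tfrac12\L = D^{-1}-\frac{4}{3(n-1)}uu'$ and run an inertia argument: $D^{-1}$ has exactly one positive eigenvalue (EDM property of $D$ plus invertibility), the rank-one term you subtract is negative semidefinite, so Weyl's inequality forces $\lambda_i(-\tfrac12\L)\le\lambda_i(D^{-1})<0$ for $i\ge 2$, and the known kernel vector $\1_m$ then pins the top eigenvalue of $-\tfrac12\L$ at exactly $0$. The paper instead passes through the Moore--Penrose inverse: from $\rank(\L)=m-1$ and $\L\1_m=0$ it gets $\L\L^{\dag}=P$ (the projection onto $\{\1_m\}^{\perp}$), derives $PDP=-2\L^{\dag}$ and the reconstruction $D=\diag(\L^{\dag})J_m+J_m\diag(\L^{\dag})-2\L^{\dag}$, and transfers $x'Dx\le 0$ on $\{\1_m\}^{\perp}$ into $x'\L^{\dag}x\ge 0$ there, hence $\L\succeq 0$. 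Both arguments ultimately rest on the same external input (Theorem 14 of \cite{Jakli}, that $D$ is a Euclidean distance matrix) and on the previously established facts $\L\1_m=0$ and $\rank(\L)=m-1$; yours is arguably more elementary, needing only Weyl's inequality and the inertia of $D$, while the paper's computation of $\L^{\dag}$ buys the identity $D=\diag(\L^{\dag})J_m+J_m\diag(\L^{\dag})-2\L^{\dag}$, which it reuses in the subsequent interlacing theorem. The one step in your write-up that deserves explicit care is the direction of Weyl's inequality ($\lambda_i(B+C)\le\lambda_i(B)+\lambda_1(C)$ with $\lambda_1(C)=0$ for the negative semidefinite rank-one term), and you have stated it correctly.
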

\begin{proof}
Since $\rank(\L)=m-1$ and $\L \1_{m}=0$, $\L \L^{\dag}$ is a symmetric idempotent matrix with null space equal to $\mbox{span}\{\1_{m}\}$.
Thus, $\L \L^{\dag}=I_{m}-\frac{J_{m}}{n}$. Define $P:=I_{m}-\frac{J_{m}}{n}$. By the identity $$\L D=-2I_{m}+2u \1_{m}',$$ we get 
$PDP=-2\L^{\dag}$. Let $D:=[d_{ij}]$ and $\L^{\dag}:=[a_{ij}]$. It is now easy to get the relation
\[d_{ij}=a_{ii}+a_{jj}-2a_{ij}. \]
From the above equation, 
\begin{equation} \label{ldj}
D=\diag(\L^{\dag})J_{m}+ J_{m}\diag( \L^{\dag}) -2 \L^{\dag}. 
\end{equation}
By Theorem 14 in \cite{Jakli}, $x' D x \leq 0$ for all $x \in \{\1_{m}\}^{\perp}$.
Now, $(\ref{ldj})$ implies that $x' \L^{\dag} x \geq 0$ for all $x \in \{\1_{m}\}^{\perp}$. We know that $\rank(\L)=m-1$ and $\L\1_{m}=0$. By the properties of Moore-Penrose inverse, we deduce that $x' \L x \geq 0$ for all $x \in \{\1_{m}\}^{\perp}$. Furthermore, since $\L \1_{m}=0$, it follows that $\L$ is positive semidefinite. The proof is complete.
\end{proof}

\begin{theorem}
All cofactors of $\L$ are equal to $2^{n-3}$.
\end{theorem}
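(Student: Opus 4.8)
The plan is to first argue that $\adj(\L)$ is a scalar multiple of $\1_m\1_m'$, so that all cofactors of $\L$ coincide, and then to identify that scalar using $\det(D)$ together with the inverse formula of Theorem~\ref{inverse}.

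For the first step, recall from the previous theorem that $\L\1_m=0$, $\1_m'\L=0$, and $\rank(\L)=m-1$; in particular $\det(\L)=0$ and $\ker(\L)=\mathrm{span}\{\1_m\}$. The identity $\L\,\adj(\L)=\det(\L)I_m=0$ forces every column of $\adj(\L)$ to lie in $\ker(\L)$, and, using the symmetry of $\L$, the identity $\adj(\L)\,\L=0$ forces every row of $\adj(\L)$ to lie in $\mathrm{span}\{\1_m'\}$. Hence $\adj(\L)=c\,\1_m\1_m'$ for some scalar $c$, so every cofactor of $\L$ equals $c$.

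For the second step, I would apply the matrix determinant lemma (P\ref{(P7)}) to the formula $D^{-1}=-\tfrac12\L+\tfrac{4}{3(n-1)}uu'$. Since $-\tfrac12\L$ is singular, $\det(-\tfrac12\L)=(-\tfrac12)^m\det(\L)=0$ and $\adj(-\tfrac12\L)=(-\tfrac12)^{m-1}\adj(\L)=(-\tfrac12)^{m-1}c\,\1_m\1_m'$, so (P\ref{(P7)}) gives
\[\det(D^{-1})=\tfrac{4}{3(n-1)}\,u'\,\adj\!\big(-\tfrac12\L\big)\,u=\tfrac{4}{3(n-1)}\big(-\tfrac12\big)^{m-1}c\,(\1_m'u)^2.\]
Now $\1_m'u=1$ was already computed in the proof that the row sums of $\L$ vanish, and $\det(D^{-1})=1/\det(D)=\big(3(n-1)2^{n-1}\big)^{-1}$ by Theorem~\ref{det}. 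With $m=2n-1$ we have $(-\tfrac12)^{m-1}=2^{2-2n}$, so the displayed identity collapses to $2^{-(n-1)}=2^{4-2n}c$, which yields $c=2^{n-3}$.

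I do not expect a genuine obstacle; once $\adj(\L)=c\,\1_m\1_m'$ is established the remainder is bookkeeping, the only delicate point being the sign-and-power accounting with $m=2n-1$. The small cases $n=4$ and $n=6$, where the common cofactor equals $2^{1}=2$ and $2^{3}=8$, can be used to check against the matrices $\L$ displayed above. One could also bypass the determinant lemma by taking adjugates in $\L D=-2I_m+2u\1_m'$: from $\adj(\L D)=\adj(D)\,\adj(\L)$, the fact that $\adj(D)\1_m=\det(D)\,D^{-1}\1_m=2^{n+1}u$, and $\adj\!\big(-2(I_m-u\1_m')\big)=(-2)^{m-1}u\1_m'$, one obtains $c\,2^{n+1}=2^{2n-2}$, hence again $c=2^{n-3}$.
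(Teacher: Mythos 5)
Your argument is correct and follows essentially the same route as the paper: you first establish $\adj(\L)=c\,\1_m\1_m'$ (the paper asserts this more briefly from the symmetry of $\L$ and $\L\1_m=0$), then apply the matrix determinant lemma (P\ref{(P7)}) to $D^{-1}=-\tfrac12\L+\tfrac{4}{3(n-1)}uu'$, use $\1_m'u=1$, and compare with $\det(D)=3(n-1)2^{n-1}$ from Theorem \ref{det} to extract $c=2^{n-3}$, exactly as in the paper's proof. The power-and-sign bookkeeping with $m=2n-1$ checks out, and your closing alternative via adjugates of $\L D=-2I_m+2u\1_m'$ is a valid shortcut the paper does not take.
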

\begin{proof}
Since $\L$ is symmetric and $\L \1=0$, all cofactors of $\L$ are equal. Let the common cofactor of $\L$ be $\delta$.
By the inverse formula,
\[D^{-1}=-\frac{1}{2} \L + \frac{4}{3(n-1)}uu'. \]
By using (P\ref{(P7)}) and $\1'_{m}u=1$, we get 
\begin{equation*}
\begin{aligned}
\det( D^{-1})&= \det (-\frac{1}{2} \L) + \frac{4}{3(n-1)} u' \mbox{adj} (-\frac{1}{2} \L)u \\
&=\frac{4}{3(n-1)} (-1)^{m-1} \frac{1}{2^{m-1}} \delta.
\end{aligned}
\end{equation*}
Since $m=2n-1$ and $n$ is even, we have
\begin{equation}\label{detDinv1}
\begin{aligned}
\det(D^{-1})=\frac{4}{3(n-1)} \frac{1}{2^{2n-2}} \delta. 
\end{aligned}
\end{equation}
By Theorem \ref{det}, 
\begin{equation}\label{detdinv2}
    \det(D^{-1})=\frac{1}{3(n-1)}\frac{1}{2^{n-1}}.
\end{equation}  Comparing (\ref{detDinv1}) and (\ref{detdinv2}), we get 
\[\delta=2^{n-3}.\]
The proof is complete.
\end{proof}

By Theorem 14 in \cite{Jakli}, $D$ is a Euclidean distance matrix. Since $D$ is invertible, by (P\ref{(P8)}), $D$ has one positive and $m-1$ negative eigenvalues. We now obtain an interlacing property between the eigenvalues of $\L$ and $D$. 
\begin{theorem}
Suppose the eigenvalues of $D$ and $\L$ are arranged as $$\mu_1>0>\mu_2 \geq \cdots \geq \mu_m,$$ 
and
$$\lambda_1 \geq \cdots \geq \lambda_{m-1} > \lambda_m = 0,$$ respectively. Then
\[ 0>-\frac{2}{\lambda_1}\geq\mu_2\geq -\frac{2}{\lambda_2}\geq 
\cdots \geq -\frac{2}{\lambda_{m -1}}\geq \mu_m.\]
\end{theorem}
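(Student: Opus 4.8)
The plan is to read the interlacing straight off the inverse formula of Theorem~\ref{inverse}, which presents $D^{-1}$ as a rank-one positive semidefinite perturbation of $-\frac{1}{2}\L$, and then quote the classical interlacing inequalities for such perturbations.

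First I would collect the spectral data. Since $\L$ is positive semidefinite of rank $m-1$ with $\L\1_m=\0$, its eigenvalues are exactly $\lambda_1\ge\cdots\ge\lambda_{m-1}>\lambda_m=0$, so the eigenvalues of $-\frac{1}{2}\L$, written in decreasing order, are $0,\,-\frac{\lambda_{m-1}}{2},\,-\frac{\lambda_{m-2}}{2},\,\dots,\,-\frac{\lambda_{1}}{2}$. On the other side, $D$ is invertible, so $D^{-1}$ has eigenvalues $\mu_1^{-1},\dots,\mu_m^{-1}$; because $t\mapsto 1/t$ reverses order on the negative half-line and $\mu_1>0>\mu_2\ge\cdots\ge\mu_m$ (as already established from the Euclidean distance matrix property), the decreasing order of the eigenvalues of $D^{-1}$ is $\mu_1^{-1}>0>\mu_m^{-1}\ge\mu_{m-1}^{-1}\ge\cdots\ge\mu_2^{-1}$. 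Finally $\frac{4}{3(n-1)}>0$ and $u\ne\0$ (indeed $\1_m'u=1$), so $\frac{4}{3(n-1)}uu'$ is positive semidefinite of rank one, and $D^{-1}=-\frac{1}{2}\L+\frac{4}{3(n-1)}uu'$.

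Next I would invoke the standard interlacing theorem for a rank-one positive semidefinite update, a consequence of Weyl's inequalities (equivalently Courant--Fischer): if $B=A+\rho vv'$ with $\rho\ge0$ and $A,B$ symmetric of order $m$, then, denoting the $k$-th largest eigenvalue of a matrix by $\mathrm{ev}_k$ and setting $\mathrm{ev}_0=+\infty$, one has $\mathrm{ev}_k(A)\le\mathrm{ev}_k(B)\le\mathrm{ev}_{k-1}(A)$ for $1\le k\le m$. Applying this with $A=-\frac{1}{2}\L$ and $B=D^{-1}$, and using $\mathrm{ev}_k(-\frac12\L)=-\frac{\lambda_{m+1-k}}{2}$ together with $\mathrm{ev}_1(D^{-1})=\mu_1^{-1}$ and $\mathrm{ev}_k(D^{-1})=\mu_{m+2-k}^{-1}$ for $k\ge2$, the inequalities for $k=2,\dots,m$ become $-\frac{\lambda_{m+1-k}}{2}\le\mu_{m+2-k}^{-1}\le-\frac{\lambda_{m+2-k}}{2}$. (Incidentally $\mathrm{ev}_2(D^{-1})\le\mathrm{ev}_1(-\frac12\L)=0$ re-confirms that the unique positive eigenvalue of $D^{-1}$ is the top one, $\mu_1^{-1}$.) Re-indexing with $j=m+2-k$ gives $-\frac{\lambda_{j-1}}{2}\le\mu_j^{-1}\le-\frac{\lambda_j}{2}$ for $2\le j\le m-1$ together with $-\frac{\lambda_{m-1}}{2}\le\mu_m^{-1}\le0$. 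Since $\lambda_1,\dots,\lambda_{m-1}>0$ and $\mu_2,\dots,\mu_m<0$, every term in these two-sided inequalities is negative, so passing to reciprocals (order-reversing) yields $-\frac{2}{\lambda_{j-1}}\ge\mu_j\ge-\frac{2}{\lambda_j}$ for $2\le j\le m-1$ and $-\frac{2}{\lambda_{m-1}}\ge\mu_m$; chaining these and observing $-\frac{2}{\lambda_1}<0$ produces exactly the asserted chain.

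The only genuine work is bookkeeping: the conclusion is phrased in terms of the $\mu_j$ themselves whereas the natural object is $D^{-1}$, so the delicate point will be lining up the two decreasing-order eigenvalue lists correctly and then pushing each inequality through the order-reversing reciprocal map on the negative reals while keeping the index shift straight. Conceptually the picture is transparent --- the rank-one bump $\frac{4}{3(n-1)}uu'$ creates the single positive eigenvalue $\mu_1^{-1}$ of $D^{-1}$ and displaces every other eigenvalue upward by at most one ``slot'', which is precisely why $-\frac{2}{\lambda_1},\dots,-\frac{2}{\lambda_{m-1}}$ interlace the negative eigenvalues $\mu_2,\dots,\mu_m$ of $D$.
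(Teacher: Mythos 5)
Your proof is correct, but it takes a genuinely different route from the paper. You work on the inverse side: writing $D^{-1}=-\frac{1}{2}\L+\frac{4}{3(n-1)}uu'$ as a positive semidefinite rank-one update of $-\frac{1}{2}\L$, you apply the Weyl-type interlacing $\mathrm{ev}_k(A)\le \mathrm{ev}_k(A+\rho vv')\le \mathrm{ev}_{k-1}(A)$ and then push the resulting two-sided bounds through the order-reversing reciprocal map on the negative reals; your index bookkeeping ($\mathrm{ev}_k(-\frac12\L)=-\lambda_{m+1-k}/2$, $\mathrm{ev}_k(D^{-1})=\mu_{m+2-k}^{-1}$ for $k\ge 2$) checks out and delivers exactly the asserted chain. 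The paper instead works on $D$ itself: from $\L D=-2I_m+2u\1_m'$ it derives $PDP=-2\L^{\dag}$ with $P=I_m-\frac{J_m}{m}$, hence $D=\diag(\L^{\dag})J_m+J_m\diag(\L^{\dag})-2\L^{\dag}$, conjugates by an orthogonal $Q$ diagonalizing $\L^{\dag}$ so that $\diag(-2/\lambda_1,\dots,-2/\lambda_{m-1})$ appears as a principal submatrix of $Q'DQ$, and invokes Cauchy interlacing. Your approach is arguably the lighter one here: it needs only the inverse formula and a completely standard rank-one perturbation lemma, and it sidesteps the Moore--Penrose identity and the observation that $Q'\diag(\L^{\dag})J_mQ$ is supported on the first column. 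The paper's approach buys a structural dividend --- the explicit representation of $D$ in terms of $\L^{\dag}$, which is the Euclidean-distance-matrix Gram relation and is of independent interest --- at the cost of those extra verifications. The one place where your write-up leans on an external fact is the sign pattern $\mu_1>0>\mu_2\ge\cdots\ge\mu_m$ needed to order the eigenvalues of $D^{-1}$; this is supplied by the theorem's hypothesis (and justified in the paper via the Euclidean distance matrix property and invertibility of $D$), so no gap results.
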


\begin{proof}
The eigenvalues of $\L^{\dag}$ are 
\[0<\dfrac{1}{\lambda_1} \leq \cdots \leq \dfrac{1}{\lambda_{m-1}} .\] 
Let $Q$ be an orthogonal matrix such that
$$Q'\L^{\dag}Q=\diag\bigg(0,\dfrac{1}{\lambda_1},\dotsc,\dfrac{1}{\lambda_{m-1}}\bigg).$$
By an easy computation, we see that 
$Q' \diag(\L^{\dag})J Q$ has first column non-zero and remaining columns equal to zero. 
Since 
\[D=\diag(\L^{\dag})J_m+ J_m\diag( \L^{\dag}) -2 \L^{\dag}, \]
it follows that 
$\diag\big(-\frac{2}{\lambda_1},\dotsc,-\frac{2}{\lambda_{m-1}}\big)$ is a principal submatrix of $Q'DQ$.
By interlacing theorem, we deduce
\[\mu_1 \geq 0>-\dfrac{2}{\lambda_1} \geq \mu_2 \geq \cdots \geq -\dfrac{2}{\lambda_{m-1}} \geq \mu_m.\]
The proof is complete.
\end{proof}

\section{Acknowledgement}
I would like to thank Prof. R.B. Bapat and Dr. R. Balaji for their help in improving the presentation of the manuscript.
\bibliography{mybibfile}
\end{document}